\definecolor{purple}{rgb}{.9,0,.9}
\let\orgdescriptionlabel\descriptionlabel
\renewcommand*{\descriptionlabel}[1]{%
  \let\orglabel\label
  \let\label\@gobble
  \phantomsection
  \edef\@currentlabel{#1}%
  \let\label\orglabel
  \orgdescriptionlabel{#1}%
}
\newcommand{\Real}{\mathbb{R}}
\newcommand{\Int}{\mathbb{Z}}
\newcommand{\rfa}{\quad {\rm for \ all}\ }
\newcommand{\cD}{{\cal D}}
\newcommand{\cL}{{\cal L}}
\newcommand{\cV}{{\cal V}}\newcommand{\cW}{{\cal W}}
\newcommand{\bb}{{\bf b}}
\newcommand{\bu}{{\bf u}}
\newcommand{\bv}{{\bf v}}\newcommand{\bw}{{\bf w}}
\newcommand{\bA}{{\bf A}}
\newcommand{\bB}{{\bf B}}\newcommand{\bC}{{\bf C}}\newcommand{\bD}{{\bf D}}
\newcommand{\bE}{{\bf E}}
\newcommand{\bH}{{\bf H}}
\newcommand{\bK}{{\bf K}}\newcommand{\bL}{{\bf L}}\newcommand{\bM}{{\bf M}}
\newcommand{\bQ}{{\bf Q}}\newcommand{\bS}{{\bf S}}
\newcommand{\bT}{{\bf T}}
\newcommand{\Lin}{\mathop{\rm Lin}}
\newcommand{\Sym}{\mathop{\rm Sym}}
\newcommand{\bbC}{\mathbb{C}}
\newcommand{\bbS}{\mathbb{S}}
\newcommand{\bbA}{\mathbb{A}}
\newcommand{\bbT}{\mathbb{T}}
\newcommand{\bpsi}{\boldsymbol{\psi}}
\newcommand{\bphi}{\boldsymbol{\phi}}
\newcommand{\ve}{\varepsilon}
\newtheorem{theorem}{Theorem}[section]
\newtheorem{lemma}[theorem]{Lemma}
\newtheorem{corollary}[theorem]{Corollary}
\newtheorem{definition}[theorem]{Definition}
\newtheorem{proposition}[theorem]{Proposition}
\def\Xint#1{\mathchoice
{\XXint\displaystyle\textstyle{#1}}%
{\XXint\textstyle\scriptstyle{#1}}%
{\XXint\scriptstyle\scriptscriptstyle{#1}}%
{\XXint\scriptscriptstyle\scriptscriptstyle{#1}}%
\!\int}
\def\XXint#1#2#3{{\setbox0=\hbox{$#1{#2#3}{\int}$ }
\vcenter{\hbox{$#2#3$ }}\kern-.6\wd0}}
\def\dashint{\Xint-}
\newcommand{\beqn}{\begin{equation}}
\newcommand{\eeqn}{\end{equation}}
\newcommand{\bone}{\textbf{1}}
\newcommand{\bzero}{{\bf 0}}
\newcommand{\abs}[1]{|#1\rvert}
\newcommand{\norm}[1]{\left\|#1\right \|}
\newcommand{\dv}{\,\text{dv}}
\newcommand{\trans}{{\scriptscriptstyle\mskip-1mu\top\mskip-2mu}}
\title{On the homogenization of a new class of locally periodic microstructures in linear elasticity with residual stress}
\author{Brian Seguin}
\begin{document}
\date{}

\maketitle

%\tableofcontents

\begin{abstract}
\noindent Many biological and engineering materials have nonperiodic microstructures for which classical periodic homogenization results do not apply.  Certain nonperiodic microstructures may be approximated by locally periodic microstructures for which homogenization techniques are available.  Motivated by the consideration that such materials are often anisotropic and can posses residual stresses, a broad class of locally periodic microstructures is considered and the resulting effective macroscopic equations are derived.  The effective residual stress and effective elasticity tensor are determined by solving unit cell problems at each point in the domain.  However, it is found that for a certain class of locally periodic microstructures, solving the unit cell problems at only one point in the domain completely determines the effective elasticity tensor.\\

\noindent \textbf{Keywords:} homogenization, elasticity, microstructure, residual stress\\

\noindent \textbf{AMS subject classification:} 35B27, 35Q74, 74Q15 
\end{abstract}

\section{Introduction}

A great deal of work has been done on the homogenization of materials with periodic microstructure.  See, for example, the books by Cioranescu and Donato \cite{CD}, Oleinik, Shomaev, and Yosifian \cite{OSY}, Jikov, Kozlov and Oleinik \cite{JKO}, Mei and Vernescu \cite{MV}, Bensoussan, Lions and Papanicolaou \cite{BLP}, and the references therein.  For periodic microstructure, the effective properties of the material, often called the effective or macroscopic coefficients, are determined by solving unit cell problems.  From the early years of homogenization theory, results were also known for nonperiodic microstructures.  See, for example, the works of Spagnolo \cite{Spa}, Murat \cite{Murat}, and Tartar \cite{Tartar}.  However, these results do not provide a way of computing the effective coefficients.  %Put another way, they do not yield unit cell problems.

One of the avenues of research in the field of homogenization is to broaden the class of microstructures in which the effective coefficients can be explicitly calculated.  This is often accomplished by considering microstructures that can be related to a periodic structure.  Depending on how the structure compares to a periodic one results in different avenues of generalization.  Microstructures that are uniformly close to a periodic structure are called almost periodic, see for example \cite{CDG2, AF}, while microstructures that can be approximated locally by periodic structures are called locally periodic.  In this work we focus on the later type of microstructure.  Briane \cite{Briane90,Briane93,Briane94} was interested in microstructures that are diffeomorphic to a periodic structure.  Using $H$-convergence , Briane found that such microstructures could be approximated by locally periodic microstructures, which consist of patches of periodic structure and the periodicity in nearby patches are closely related.  In the homogenized limit of such structures, both the size of the patches and the period of the microstructure within each patch goes to zero, but the periods go to zero more rapidly.  The resulting effective coefficients are found by solving a unit cell problems at each point in the domain.  As Shkoller \cite{Shk} points out, this is not computationally feasible, but it is possible to approximate the effective coefficients by solving only a finite number of unit cell problems.  Motivated by Briane's work, Alexandre \cite{Ale} introduced $\theta$--2 convergence, which is analogous to the notion of two-scale convergence first introduced by Nguetseng \cite{Nguetseng89} and further developed by Allaire \cite{Allaire92}, and is well-suited for smoothly transformed periodic structures.  A further generalization, under the name of scale convergence, was accomplished by Mascarenhas and Toader \cite{MT} using Young measures.  Looking back at Briane's work, Ptashnyk \cite{Plpts} considered a broader class of locally periodic microstructures which are not necessarily obtained by transforming a periodic structure with a smooth function.  To homogenize such microstructures, Ptashnyk introduced the concept of locally periodic two-scale convergence.  She \cite{PlptsUO} also developed an unfolding operator associated with this type of two-scale convergence which is analogous to the unfolding operator conceived by Cioranescu, Damlamian and Griso \cite{CDG} for classical two-scale convergence.

Motivated by the types of microstructures appearing in biomaterials that are the composite of anisotropic constituents, here a broad class of locally periodic microstructures is considered.
% and use Ptashnyk's \cite{Plpts} locally periodic two-scale convergence 
%is used to homogenize a class of microstructures motivated by the desire to consider microstructures appearing in biomaterials and inhomogeneity theory.  
Many biological materials posses microstructures that are not periodic so that the classical homogenization results do not apply.  However, they often have a structure that is approximately locally periodic.  See, for example, \cite{DMKM,EM,FSTR,GDMMFT,HQG,MPR,Retal,RBF}.  A common type of microstructure in biomaterials, as well as engineering applications, is fibrous in nature in that the material can be modeled as a matrix embedded with fibers.  If the orientation of the fibers varies smoothly within the material, then the fibrous microstructure is not periodic, but can be approximated by a locally periodic microstructure.  Such structures were explicitly considered in \cite{Briane93,Plpts}.  However, when the matrix and the fibers are not isotropic, the approach taken by previous authors does not accurately reflect the fibrous microstructure.  The reason for this is that the orientation of the anisotropy of the fibers were not changed as the orientation of the fibers changed.  

A specific example of a nonperiodic microstructure in nature where this issue appears involves plant cell walls \cite{EM}, which can be modeled as an isotropic matrix, composed mainly of polysaccharides and water, embedded with cellulose microfibrils that are anisotropic \cite{DMKM}.  The anisotropy of the fibrils are aligned with the axis of the microfibril and, hence, when the orientation of the fibrils changes, the orientation of the anisotropies changes as well.

In this work, using Ptashnyk's \cite{Plpts} locally periodic two-scale convergence, we consider the homogenized limit of a material with a locally periodic microstructure in which the anisotropy in each patch of the microstructure is transformed.  This is done in the context of linear elasticity.  First we establish the result in the case where the anisotropy and the periodicity are transformed independently.  However, the case in which the anisotropy and periodicity change according to the same transformation is discussed in detail.  It is found that in this situation the resulting homogenized material is materially uniform but, in general, inhomogeneous.\footnote{For a detailed discussion of materially uniformity and inhomogeneity, see Noll \cite{N67}, Epstein and El$\dot{\text{z}}$anowski \cite{EE}, or Wang \cite{W67}.}  It is also found that in this case only six unit cell problems are needed to completely determine the effective elasticity tensor.  This homogenized tensor is still spatially dependent, but its values at different points are related to each other through the transformation that describes how the orientation of the microstructure changes from point to point.  A sketch of how to homogenize a material with a nonperiodic microstructure by approximating it with a material that has a locally locally periodic microstructure is also presented.

The transformation of the periodicity within each patch of a locally periodic microstructure is achieved by transforming a representative unit cell.  When the anisotropy is transformed as well, this is akin to changing the reference configuration for the constitutive law.  When the new reference configuration is related to the original one through a transformation that is not orthogonal, residual stresses appear.  The analysis below allows for such affects.  In the homogenized limit, a nonconstant effective residual stress term is present.  Perhaps surprisingly, a unit cell problem must be solved to determine this stress.  In contrast to the effective elasticity tensor, even in the case when the anisotropy and the periodicity are changed by the same transformation, a unit cell problem must be solved at each point in the domain to determine the effective residual stress.

In Section \ref{sectlpmc} we motivate the form of the constitutive law appearing in the equation we homogenize and give the precise definition of a locally periodic microstructure.  Section \ref{sectlptsc} recalls Ptashnyk's \cite{Plpts} locally periodic two-scale convergence and lists the results that will be used later.  The main homogenization result is established in Section \ref{sectHom}.  Finally, in Section \ref{sectInterp}, the results are discussed and several special cases are considered.

%Materials with microstructure are ubiquitous both in natural and industry (add references).  Thus, it is important to understand their properties.  Direct numerical simulation of such materials is challenging because the small length scale associated the structure of these materials is difficult to resolve.  A solution to this problem is to approximate the properties of the material with one that does not have two length scales but rather is able to indirectly model microstructure of the original material.  One way of obtaining these approximate or effective properties is using homogenization theory.

%\begin{itemize}
%\item general homogenization comments
%\item periodic microstructure and H-convergence
%\item problems of periodic microstructure
%\item locally periodic microstructure: description
%\item locally periodic microstructure: history
%\item approximates smoothly changing microstructure
%\item what is done here?
%\begin{itemize}
%\item periodic to constant
%\item locally periodic to nonconstant
%\item biological tissues don't fall under previously studied cases
%\item transform properties as well as unit cell
%\item homogeneous materials
%\end{itemize}
%\end{itemize}

%Nguetseng - A general convergence result for a functional related to the theory of homogenization
%(asymptotic expansion?)

\section{Locally periodic microstructure}\label{sectlpmc}

To motivate the types of locally periodic microstructures considered here, we begin by discussing an elastic constitutive law relative to a reference configuration that is stress free.  This law is then formulated relative to a new reference configuration that is obtained from the original by a transformation that is close to orthogonal.  The resulting constitutive law is then linearized.  

We work in $\Real^n$ with arbitrary dimension $n$, however the most physically interesting cases are $n=2,3$.  Let $\Lin$ denote the set of linear mappings from $\Real^n$ to $\Real^n$ and $\Sym$ those elements of $\Lin$ that are symmetric.  Since fourth-order tensors can be viewed as linear mappings from $\Lin$ to $\Lin$, the space of all fourth-order tensors will be denoted by $\Lin(\Lin)$.

Consider an elastic material with reference configuration $Y$, where $Y$ is a parallelepiped in $\Real^n$, whose constitutive law is
\beqn\label{originalconst}
\bS(y)=\hat\bS(\bC,y)\rfa y\in Y,
\eeqn
where $\bC=(\bone+\nabla\bu)^\trans(\bone+\nabla\bu)$ is the right Cauchy--Green strain tensor, $\bu$ is the displacement, and $\bS$ is the second Piola stress.  Assume that $\hat \bS(\bone,y)=\textbf{0}$ for all $y\in Y$ so that the reference configuration $Y$ is stress free.  Let $h(y)=\bH y+\tilde x$ be a transformation of $Y$, with $\bH\in\Lin$ invertible and $\tilde x\in\Real^n$ fixed.  The constitutive law \eqref{originalconst} relative to $h(Y)$ is given by\footnote{See, for example, Gurtin, Fried and Anand \cite{GFA}.}
\beqn
\bS(x)=\bH\hat\bS(\bH^\trans\bC\bH,\bH^{-1}(x-\tilde x))\bH^\trans\rfa x\in h(Y).
\eeqn
where $\bC$ is the right Cauchy--Green tensor relative to the new configuration.  Given a small positive $\ve$, assume that the displacement gradient is of order $\ve$ and that $\bH$ differs from an orthogonal linear mapping by a term of order $1$ so that
\begin{align}
\bC&=\bone + \nabla\bu+\nabla\bu^\trans + o(\ve),\\
\bone&=\bH^\trans\bH + o(1).
\end{align}
Setting
\beqn
\bbC(y)=2\nabla_\bC\hat\bS(\bone,y)\rfa y\in Y,
\eeqn
for any $x\in h(Y)$ we have
\begin{align}
\bS(x) &= \bH\hat\bS(\bH^\trans\bH,\bH^{-1}(x-\tilde x))\bH^\trans+\bH\nabla_\bC\hat\bS(\bH^\trans\bH,\bH^{-1}(x-\tilde x))[\bH^\trans(\bC-\bone)\bH]\bH^\trans+o(\ve)\\
& =  \bH\hat\bS(\bH^\trans\bH,\bH^{-1}(x-\tilde x))\bH^\trans+\bH\bbC(\bH^{-1}(x-\tilde x))[\bH^\trans\bE\bH]\bH^\trans+o(\ve),
\end{align}
where $\bE=\frac{1}{2}(\nabla\bu+\nabla\bu^\trans)$.  Thus, the linearized constitutive law reads
\beqn\label{linconst}
\bS(x) = \bH\hat\bS(\bH^\trans\bH,\bH^{-1}(x-\tilde x))\bH^\trans+\bH\bbC(\bH^{-1}(x-\tilde x))[\bH^\trans\bE\bH]\bH^\trans.
\eeqn
The second term on the right-hand side \eqref{linconst} depends linearly on the strain $\bE$ and is the stress due to the deformation relative to the configuration $h(Y)$.  The first term on the right-hand side remains when the strain is zero and, hence, is a residual stress.  In the case where $\bH$ is orthogonal, the residual stress vanishes.

While $\hat\bS$ is only defined on $Y$, it can be extend to all of $\Real^n$ so that it is $Y$-periodic.  When this is done, \eqref{linconst} is $h(Y)$-periodic. We want to consider a material whose microstructure consists of patches of periodic structure such that nearby patches are similar.  For example, one patch could have periodic structure of the form \eqref{linconst} while a nearby patch will have structure of the form \eqref{linconst} with $\bH$ replaced by a transformation that is close to $\bH$.  This is the idea behind a locally periodic microstructure.

To make this idea precise, begin by considering an open subset $\Omega$ of $\Real^n$ that is bounded with Lipschitz boundary, and assume that $Y$ has unit volume.  The set $\Omega$ will be the elastic material with locally periodic microstructure.  Consider a fixed $\ve>0$ (no relation to the $\ve$ used for the linearization) and $r\in (0,1)$.  For $k\in\Int^n$, set $\Omega^\ve_k=(0,\ve^r)^n+\ve^rk$.  Let $I^\ve$ be the unique subset of $\Int^n$ such that
\beqn
\Omega_k^\ve\cap\Omega\not=\emptyset\ \ \text{for all }\ k\in I^\ve\qquad\text{and}\qquad \Omega\subset\bigcup_{k\in I^\ve}\overline\Omega_k^\ve.
\eeqn
Each $\Omega_k^\ve$, for $k\in I^\ve$, is a patch of $\Omega$ in which the microstructure will be periodic.  While the size of $\Omega_k^\ve$ is of order $\ve^r$, the periodicity within $\Omega_k^\ve$ will be on the order of $\ve$.

Consider a function $\bH:\Real^n\longrightarrow\Lin$ with $\bH(x)$ invertible for all $x\in\Real^n$.  We shall use the notation $\bH_x=\bH(x)$ and $Y_x=\bH_x(Y)$ for $x\in\Real^n$.  The function $\bH$ will describe how the unit cell $Y$ is transformed so as to define the periodic structure within each patch $\Omega_k^\ve$.   

%Besides transforming the unit cell, we also allow for different orientation of elastic properties within different patches.  This will be described by another $C^1$ function $\bK:\Real^n\longrightarrow\Lin$ with $\bK(x)$ invertible for all $x\in\Real^n$.  In general, $\bH$ and $\bK$ are unrelated.  However, an important special case in when $\bH=\bK$.  This, as well as other special cases, will be discussed later. 

To specify the microstructure, for each $k\in I^\ve$ choose $x_k^\ve,\tilde x_k^\ve\in \Omega_k^\ve$ arbitrarily.  Motivated by \eqref{linconst}, define the stress in $\Omega_k^\ve$ by
\beqn\label{patchconst}
\bS(x)=\hat\bS_r(\bH_{x^\ve_k},\bH_{x^\ve_k}^{-1}(x-\tilde x_k^\ve)/\ve) + \hat\bbS(\bbC(\bH_{x^\ve_k}^{-1}(x-\tilde x_k^\ve)/\ve),\bH_{x^\ve_k})\bE\rfa x\in\Omega_k^\ve
\eeqn
where for any $\bbA\in\Lin(\Lin)$, $\bA,\bE\in\Lin$, and $y\in Y$,
\begin{align}
\label{Ssp}\hat\bS_r(\bA,y)&=\bA\hat\bS(\bA^\trans\bA,y)\bA^\trans,\\%\rfa \bA \in\Lin\\
\label{bbSsp}\hat\bbS(\bbA,\bA)\bE&=\bA(\bbA[\bA^\trans\bE\bA])\bA^\trans.%\rfa \bA,\bE\in\Lin
\end{align}
Notice that the stress defined in \eqref{patchconst} is $\ve Y_{x_k^\ve}$-periodic since $\hat \bS$ and $\bbC$ are $Y$-periodic.  Set
\beqn\label{YStranseq}
\bS_r^\ve(x)=\sum_{k\in I^\ve} \hat\bS_r(\bH_{x^\ve_k},\bH_{x^\ve_k}^{-1}(x-\tilde x_k^\ve)/\ve)\chi_{\Omega_k^\ve}(x)
\eeqn
and
\beqn\label{YCtranseq}
\bbC^\ve(x)=\sum_{k\in I^\ve} \hat\bbS(\bbC(\bH_{x^\ve_k}^{-1}(x-\tilde x_k^\ve)/\ve),\bH_{x^\ve_k})\chi_{\Omega_k^\ve}(x).
\eeqn
With these definitions the stress at any $x\in \Omega$ is given by
\beqn\label{Fconst}
\bS(x)=\hat\bS^\ve_r(x)+\bbC^\ve(x)\bE.
\eeqn
This equation defines a constitutive law for an elastic material that has patches of periodic microstructure.  Figure~\ref{locperMC} depicts the domain $\Omega$ broken into patches of order $\ve^r$ that have a periodic structure of order $\ve$. 

\begin{figure}
\centering
\includegraphics[width=2.8in]{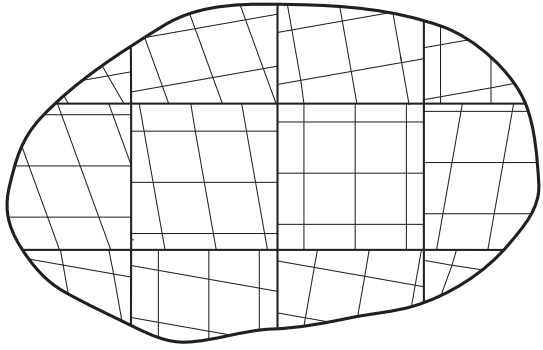}
\thicklines
\caption{A depiction of a domain $\Omega$ with locally periodic microstructure.  This domain $\Omega$ is divided into $12$ patches, each of which have slightly different periodic structure.}
\label{locperMC}
\end{figure}

Consider the displacement problem in elastostatics:
\beqn\label{dispbvp}
\left \{
\begin{array}{ll}
\text{div}(\bS_r^\ve+\bbC^\ve\nabla\bu^\ve)+\bb=\textbf{0}\qquad &\text{in}\ \Omega, \\[10pt]
\bu^\ve = \bu_\circ\qquad &\text{on}\ \partial\Omega,
%(\bbC^\ve\be(\bu^\ve))\bn = \bt\qquad &\text{on}\ \Gamma_2,
\end{array}
\right.
\eeqn
where $\bu_\circ$ is a given boundary displacement and $\bb$ is a body force.  It is known that in the limit as $\ve$ goes to zero, materials with locally periodic microstructure approximate materials with nonperiodic microstructure that are common in biological materials such as muscles \cite{GDMMFT} and plant tissues \cite{FSTR}.  Such connections were made by Briane \cite{Briane94} and Ptashnyk \cite{Plpts}.  %Computing the limit of \eqref{dispbvp} as $\ve$ goes to zero is done in Section~\ref{sectHom} using the tool of locally periodic two-scale convergence developed by Ptashnyk \cite{Plpts}.

%We are interested in finding the effective equations associated with \eqref{mixedbvp} in the limit as $\ve$ goes to $0$ using homogenization theory.   In particular, we will use the locally periodic two-scale convergence introduced by Ptashnyk \cite{Plpts}.  The results from this theory are recalled in the next section.

The locally periodic microstructure described above is different from those considered before in the works for Briane\footnote{Briane did not consider an elastic microstructure.  Rather, he was looking at the diffusion equation.  However, the main difference between the microstructure described here and that considered by Briane still holds.} \cite{Briane94} and Ptashnyk \cite{Plpts}.  In these works the stress is given by \eqref{Fconst} with $\hat \bS_r^\ve=\textbf{0}$ and $\bbC^\ve$ is given by
\beqn
\bbC^\ve(x)=\sum_{k\in I^\ve} \hat\bbS(\bbC(\bH_{x^\ve_k}^{-1}(x-\tilde x_k^\ve)/\ve),\textbf{1})\chi_{\Omega_k^\ve}(x).
\eeqn
This can be interpreted as saying that the unit cell $Y$ is transformed when the microstructure in each patch is considered but the alignment of the anisotropy of $\bbC$ is not changed in each patch.  This differs from the stress determined by \eqref{YStranseq} and \eqref{YCtranseq} in which in each patch of the material the unit cell and the anisotropy are transformed according to $\bH$.  

This suggests a general class of locally periodic microstructures whose constitutive law is of the form \eqref{Fconst} with
\begin{align}
\label{YStrans}\bS_r^\ve(x)&=\sum_{k\in I^\ve} \hat\bS_r(\bK_{x^\ve_k},\bH_{x^\ve_k}^{-1}(x-\tilde x_k^\ve)/\ve)\chi_{\Omega_k^\ve}(x),\\
\label{YCtrans}\bbC^\ve(x)&=\sum_{k\in I^\ve} \hat\bbS(\bbC(\bH_{x^\ve_k}^{-1}(x-\tilde x_k^\ve)/\ve),\bK_{x^\ve_k})\chi_{\Omega_k^\ve}(x),
\end{align}
where $\bK:\Omega\rightarrow\Lin$.  In this locally periodic microstructure, $\bH$ determines how the unit cell is transformed in the different patches within $\Omega$ and $\bK$ determines how the anisotropy of $\bbC$ is transformed in the different patches and determines if there is any residual stress.  When $\bH=\bK$, this reduces to the microstructure initially considered above and when $\bK=\textbf{1}$ this becomes the microstructure considered by Briane \cite{Briane94,Briane93} and Ptashnyk \cite{Plpts}.  However, the case $\bH\not =\bK$ as well as the case $\bH=\textbf{1}$ and $\bK\not=\textbf{1}$ describe novel microstructures.  The different effects associated with the different choices of $\bH$ and $\bK$ on the unit cell $Y$ are depicted in Figure~\ref{figYCtrans}.

The precise assumptions necessary to homogenize \eqref{dispbvp} and the derivation of the homogenized equation appear in Section~\ref{sectHom}.  To take the limit as $\ve$ goes to zero in \eqref{dispbvp} we use locally periodic two-scale convergence, which is described in the next section.

\begin{figure}
\centering
\includegraphics[height=1in]{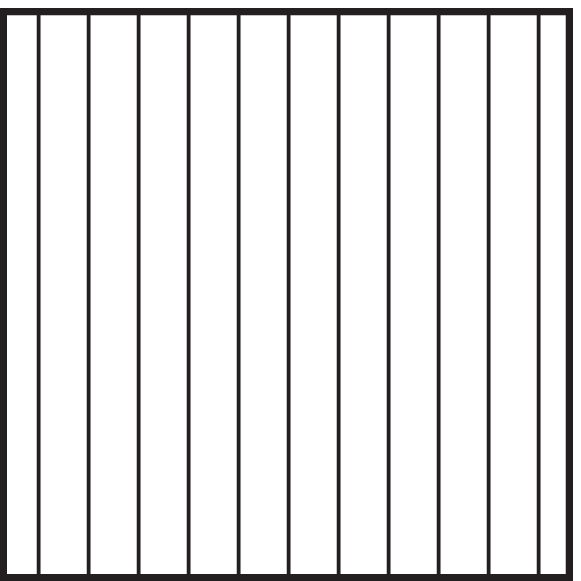}
\hspace{.5in}
\includegraphics[height=1in]{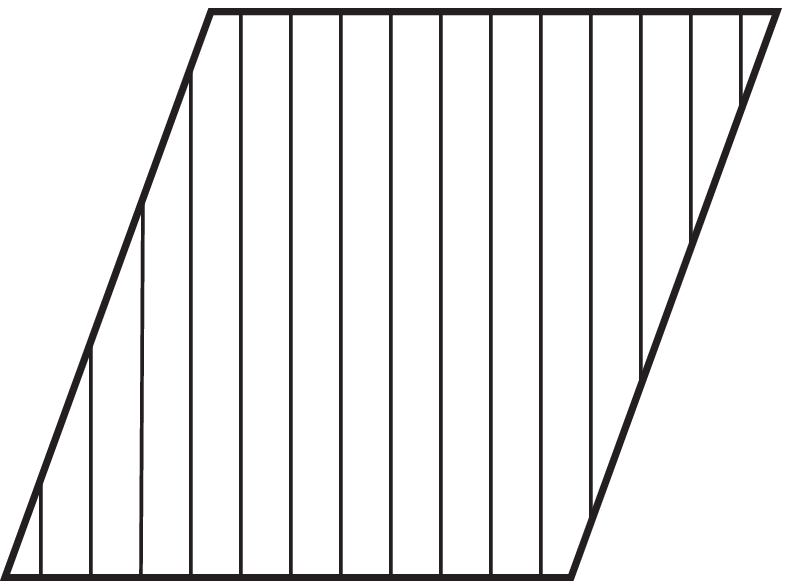}
\hspace{.5in}
\includegraphics[height=1in]{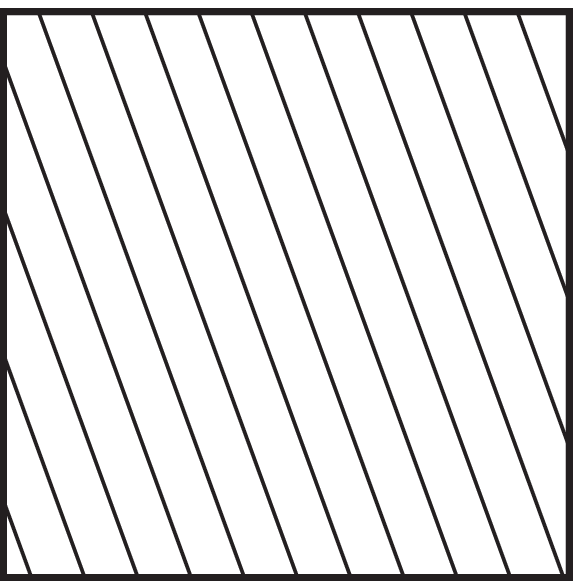}
\par\vspace{.1in}
\includegraphics[height=1in]{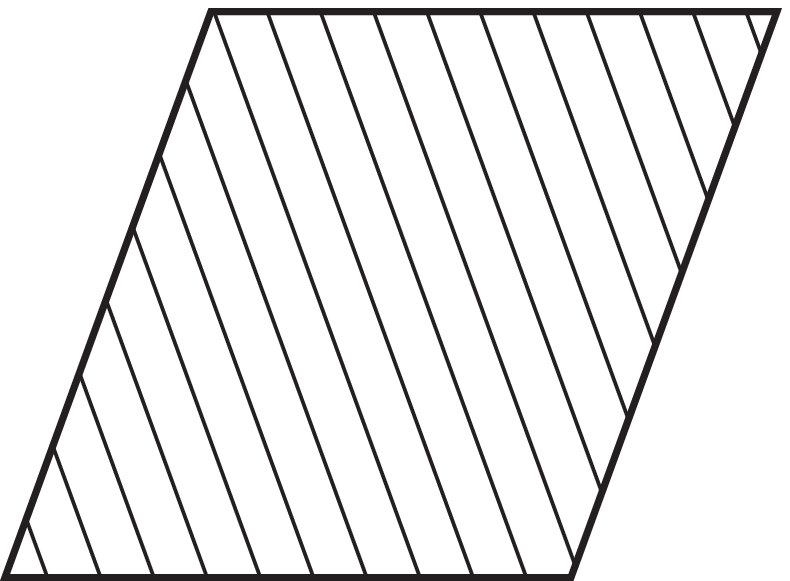}
\hspace{.5in}
\includegraphics[height=1.1in]{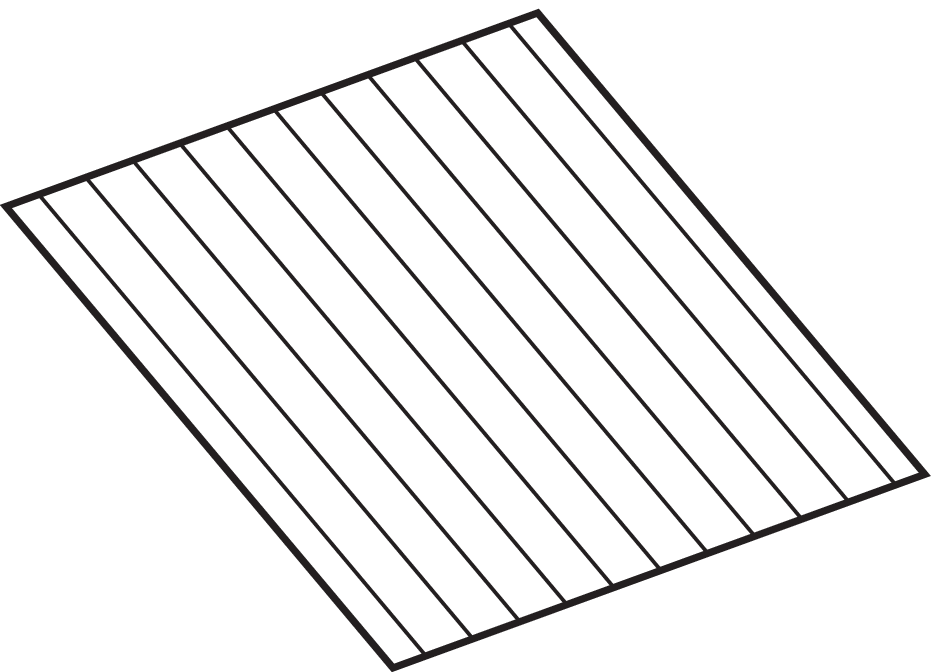}
\thicklines
\put(-262,90){(a)}
\put(-135,90){(b)}
\put(-5,90){(c)}
\put(-220,-15){(d)}
\put(-60,-15){(e)}
\caption{A depiction of the unit cell $Y$ and various transformations of it.  The parallel lines within each unit cell represent the orientation of the anisotropy of the elastic properties of the unit cell.  (a) A depiction of the original, untransformed unit cell $Y$.  (b) A depiction of a transformed unit cell with the elasticity properties untransformed.  This corresponds to the case where $\bK=\bone$.  (c) A depiction of an untransformed unit cell in which the elastic properties are transformed.  This results when $\bH=\bone$.  (d) A depiction of a transformed unit cell with transformed elastic properties in which the two transformations differ.  This occurs for generic $\bH$ and $\bK$.  (e) A depiction of a transformed unit cell with transformed elastic properties in which the two transformations are the same---that is, $\bH=\bK$.}
\label{figYCtrans}
\end{figure}

\section{Locally periodic two-scale convergence}\label{sectlptsc}

Unless stated otherwise, the results in this section are taken from the work of Ptashnyk \cite{Plpts} and are stated here for easy reference.  Let $\Omega$ be a bounded Lipschitz domain and $Y$ a parallelepiped in $\Real^n$.  Fix a finite-dimensional inner-product space $\cV$.  The choices $\cV$ equals $\Real^n$, $\cV=\Lin$, and $\Lin(\Lin)$ will be utilized.  Consider $C(\overline\Omega,C_\text{per}(Y,\cV))$, the set of all continuous functions $\bpsi:\overline\Omega\times \Real^n\longrightarrow\cV$ that are $Y$-periodic in their second argument.  Fix a $C^1$ function $\bH:\Real^n\longrightarrow\Lin$ with $\bH(x)$ invertible for all $x\in\Real^n$.  We shall use the notation $\bH_x=\bH(x)$ and $Y_x=\bH_x(Y)$ for $x\in\Real^n$.  For $\bpsi\in C(\overline\Omega,C_\text{per}(Y,\cV))$, define
\beqn\label{Hnotation}
\bpsi_\bH(x,y)=\bpsi(x,\bH^{-1}_xy)\rfa (x,y)\in\overline\Omega\times\Real^n.
\eeqn
This notation will also be used even if $\bpsi$ is independent of $x$.  Notice that for each $x\in\Omega$, $\psi_\bH(x,\cdot)$ is $Y_x$-periodic.  The space of all mappings of the form $\bpsi_\bH$ for $\bpsi\in C(\overline\Omega,C_\text{per}(Y,\cV))$ is denoted by $C(\overline\Omega,C_\text{per}(Y_\bH,\cV))$.  Conversely, given $\bphi\in C(\overline\Omega,C_\text{per}(Y_\bH,\cV))$, the corresponding mapping in $C(\overline\Omega,C_\text{per}(Y,\cV))$ will be denoted by $\bphi_{\bH^{-1}}$.  The spaces $L^2(\Omega,C_\text{per}(Y_\bH,\cV))$, $L^2(\Omega,L^2_\text{per}(Y_\bH,\cV))$, $C(\overline\Omega,L^2_\text{per}(Y_\bH,\cV))$, etc.~are defined in a similar way and are Banach spaces with the expected norm.  For example,
\beqn
\|\bphi\|_{L^2(\Omega,C_\text{per}(Y_\bH,\cV))}=\Big(\int_\Omega \sup_{y\in Y_x}|\bphi(x,y)|^2\, dx\Big)^{1/2}.
\eeqn

Consider $\bpsi\in C(\overline\Omega,C_\text{per}(Y_\bH,\cV))$.  The locally periodic approximation $\cL^\ve\bpsi$ of $\bpsi$ is defined as
\beqn\label{cLve}
(\cL^\ve\bpsi)(x)=\sum_{k\in I^\ve}\bpsi_{\bH^{-1}}(x,\bH^{-1}_{x^\ve_k}(x-\tilde x_k^\ve)/\ve)\chi_{\Omega^\ve_k}(x)\rfa x\in\Omega,
\eeqn
where, recall, $x_k^\ve$ and $\tilde x_k^\ve$ are arbitrary points in $\Omega_k^\ve$.  We also require the approximation
\beqn\label{cLve0}
(\cL^\ve_0\bpsi)(x)=\sum_{k\in I^\ve}\bpsi_{\bH^{-1}}(x^\ve_k,\bH^{-1}_{x^\ve_k}(x-\tilde x_k^\ve)/\ve)\chi_{\Omega^\ve_k}(x)\rfa x\in\Omega.
\eeqn
Notice that both of the approximations $\cL^\ve\bpsi$ and $\cL^\ve_0\bpsi$ are, in general, discontinuous.  To introduce a continuous approximation, fix $\rho\in(r,1)$ and for each $k\in I^\ve$ let $\phi_{\Omega^\ve_k}\in C^\infty_c(\Omega,\Real)$ be an approximation of $\chi_{\Omega^\ve_k}$ such that
\begin{align}
\label{phi1}\Big\|\sum_{k\in I^\ve} \abs{\phi_{\Omega^\ve_k}-\chi_{\Omega^\ve_k}}\Big \|_{L^2(\Real^n)}\rightarrow 0\qquad \text{as}\ \ve\rightarrow 0\\
\label{phi2}\norm{\nabla\phi_{\Omega^\ve_k}}_{L^\infty(\Real^n)}\leq C\ve^{-\rho}.
\end{align}
For the construction of $\phi_{\Omega^\ve_k}$ see Briane \cite{Briane94}.  Define the smooth approximation of $\bpsi$ by
\beqn\label{cLverho}
(\cL^\ve_\rho\bpsi)(x)=\sum_{k\in I^\ve} \bpsi_{\bH^{-1}}(x,\bH^{-1}_{x^\ve_k}(x-\tilde x_k^\ve)/\ve)\phi_{\Omega^\ve_k}(x)\rfa x\in\Omega.
\eeqn
The approximations $\cL^\ve\bpsi$, $\cL^\ve_0\bpsi$, and $\cL^\ve_\rho\bpsi$ can also be defined for $\bpsi$ in $L^2(\Omega,C_\text{\rm per}(Y_\bH,\cV))$ or $C(\overline\Omega,L^2_\text{\rm per}(Y_\bH,\cV))$.

The approximations satisfy the following convergence results.

\begin{proposition}\label{clconv}
For $\bpsi\in L^p(\Omega,C_\text{\rm per}(Y_\bH,\cV))\cup C(\overline\Omega,L^p_\text{\rm per}(Y_\bH,\cV))$, $1\leq p<\infty$, we have
\begin{align}
\lim_{\ve\rightarrow 0} \int_\Omega |(\cL^\ve\bpsi)(x)|^p\, dx&=\int_\Omega\dashint_{Y_x}|\bpsi(x,y)|^p\, dydx\\
\lim_{\ve\rightarrow 0} \int_\Omega |(\cL^\ve_0\bpsi)(x)|^p\, dx&=\int_\Omega\dashint_{Y_x}|\bpsi(x,y)|^p\, dydx.
\end{align}
and for $\bpsi\in L^1(\Omega,C_\text{\rm per}(Y_\bH,\cV))\cup C(\overline\Omega,L^1_\text{\rm per}(Y_\bH,\cV))$ we have
\begin{align}
\lim_{\ve\rightarrow 0} \int_\Omega (\cL^\ve\bpsi)(x)\, dx&=\int_\Omega\dashint_{Y_x}\bpsi(x,y)\, dydx\\
\lim_{\ve\rightarrow 0} \int_\Omega (\cL^\ve_0\bpsi)(x)\, dx&=\int_\Omega\dashint_{Y_x}\bpsi(x,y)\, dydx.
\end{align}
\end{proposition}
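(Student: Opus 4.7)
The plan is to handle smooth test functions $\bpsi \in C(\overline\Omega, C_{\text{per}}(Y_\bH, \cV))$ first by patch-wise periodic averaging, pass from $\cL^\ve_0\bpsi$ to $\cL^\ve\bpsi$ by continuity in $x$, and finally extend to the larger ambient spaces by density. The crucial observation is that $\cL^\ve_0\bpsi$ restricted to a single patch $\Omega^\ve_k$ is $\ve Y_{x^\ve_k}$-periodic in $x$; since $\Omega^\ve_k$ has side length $\ve^r$ and one period has diameter $O(\ve)$, the patch contains on the order of $\ve^{(r-1)n}$ full periods, a quantity that diverges precisely because $r<1$.

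Given this, the standard periodic-averaging estimate, computed via the change of variables $y=\bH^{-1}_{x^\ve_k}(x-\tilde x^\ve_k)/\ve$ with Jacobian $\ve^n|\det\bH_{x^\ve_k}|$, yields
\[
\int_{\Omega^\ve_k}|(\cL^\ve_0\bpsi)(x)|^p\,dx = |\Omega^\ve_k|\,\dashint_{Y_{x^\ve_k}}|\bpsi(x^\ve_k,y)|^p\,dy + o(|\Omega^\ve_k|)
\]
uniformly in $k$, with the boundary correction of order $\ve^{1-r}|\Omega^\ve_k|$. Summing over $k\in I^\ve$ produces a Riemann sum for the continuous integrand $x\mapsto\dashint_{Y_x}|\bpsi(x,y)|^p\,dy$ on $\overline\Omega$, which converges to the desired double integral; patches meeting $\partial\Omega$ contribute only $O(\ve^r)$ by the Lipschitz regularity of $\partial\Omega$. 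Replacing $\cL^\ve_0\bpsi$ by $\cL^\ve\bpsi$ costs only a uniform-continuity error, since on $\Omega^\ve_k$ the two differ only in the first argument of $\bpsi_{\bH^{-1}}$ ($x$ versus $x^\ve_k$, at distance at most $C\ve^r$), giving $\|\cL^\ve\bpsi-\cL^\ve_0\bpsi\|_{L^\infty(\Omega)}\to 0$. The two unsigned identities in the proposition follow by the identical argument with $|\cdot|^p$ replaced by the identity.

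To reach the larger spaces $L^p(\Omega,C_{\text{per}}(Y_\bH,\cV))$ and $C(\overline\Omega,L^p_{\text{per}}(Y_\bH,\cV))$, I would approximate $\bpsi$ by smooth $\bpsi^\eta$ in the ambient norm and combine the smooth case with a uniform bound $\|\cL^\ve\bphi\|_{L^p(\Omega)}\le C\|\bphi\|$, which itself follows from applying the smooth case to $|\bphi|^p$. I expect the main technical point to be the uniform-in-$k$ boundary estimate in the periodic-averaging step: bounding the error incurred by a patch $\Omega^\ve_k$ whose geometry is not aligned with the local period cell $\ve Y_{x^\ve_k}$. This is the step where $r<1$ is used essentially; once that uniform estimate is in hand, the Riemann-sum limit, the continuity-in-$x$ passage, and the density extension are all routine.
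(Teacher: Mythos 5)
The paper offers no proof of this proposition to compare against: Section \ref{sectlptsc} opens by stating that, unless noted otherwise, its results are taken from Ptashnyk \cite{Plpts}, and Proposition \ref{clconv} is one of the imported statements (only the later Propositions \ref{obtsc}--\ref{LL0conv} are flagged as new and proved). Judged on its own, your strategy is the standard one for approximation results of this type and is essentially sound. The patch-wise periodic averaging with relative boundary error $O(\ve^{1-r})$ (boundary layer of thickness $O(\ve)$ around $\partial\Omega_k^\ve$ against patch volume $\ve^{rn}$), the Riemann-sum passage using continuity of $x\mapsto\dashint_{Y_x}|\bpsi(x,y)|^p\,dy$ (which, after pulling back to $Y$, reduces to continuity of $x\mapsto\dashint_Y|\bpsi_{\bH^{-1}}(x,\bar y)|^p\,d\bar y$ and uses $\bH\in C^1$), the $O(\ve^r)$ control of patches meeting $\partial\Omega$, and the equi-uniform-continuity comparison of $\cL^\ve$ with $\cL^\ve_0$ are all correct; the last step is in fact the same argument the paper writes out for Proposition \ref{LL0conv}.

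Two points deserve tightening. First, the uniform bound $\|\cL^\ve\bphi\|_{L^p(\Omega)}\le C\|\bphi\|$ needed to close the density step cannot be deduced ``by applying the smooth case to $|\bphi|^p$'': the smooth case only gives convergence (hence boundedness) for each fixed \emph{smooth} $\bphi$, whereas the density argument must apply the bound to the non-smooth difference $\bpsi-\bpsi^\eta$. The bound is nonetheless available: for $\bphi\in L^p(\Omega,C_\text{\rm per}(Y_\bH,\cV))$ it is immediate from the pointwise estimate $|(\cL^\ve\bphi)(x)|\le\sup_{y}|\bphi(x,y)|$, and for $\bphi\in C(\overline\Omega,L^p_\text{\rm per}(Y_\bH,\cV))$ it follows from the same cell-counting computation as your averaging estimate, which requires no continuity in $y$. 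Second, you should acknowledge the well-definedness issue behind the operators on the larger spaces: $\cL^\ve_0$ evaluates $\bpsi$ at the specific points $x_k^\ve$, which is delicate when $\bpsi$ is only $L^p$ in $x$, and $\cL^\ve$ composes an $L^p_\text{\rm per}(Y)$ function with the map $x\mapsto\bH^{-1}_{x_k^\ve}(x-\tilde x_k^\ve)/\ve$, which is the usual ``admissibility'' subtlety of two-scale convergence. Neither point undermines the overall plan, but both belong in a complete proof.
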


\noindent  The symbol $\dashint$ denotes the average integral. Notice that in the above proposition, $\bpsi$ is continuous in at least one of its arguments.  
\begin{definition}\label{lptsdef}
A family of functions $\bu^\ve\in L^2(\Omega,\cV)$ converges locally periodic two-scale to $\bu\in L^2(\Omega,L^2(Y_\bH,\cV))$ as $\ve\rightarrow 0$ if for all $\bpsi\in L^2(\Omega,C_\text{\rm per}(Y_\bH,\cV))$
\beqn\label{lpts}
\lim_{\ve\rightarrow 0}\int_\Omega \bu^\ve(x)\cdot(\cL^\ve\bpsi)(x)\, dx = \int_\Omega\int_{Y_x}\bu(x,y)\cdot\bpsi(x,y)\, dydx.
\eeqn
\end{definition}
\noindent Using a density argument, it can be shown that $W^{1,\infty}_0(\Omega,C^\infty_{\rm per}(Y_\bH,\cV))$ can be used as the space of test functions in the above definition instead of $L^2(\Omega,C_\text{\rm per}(Y_\bH,\cV))$.

The following results are analogous to the compactness results for the classical two-scale convergence of Nguetseng \cite{Nguetseng89} and Allaire \cite{Allaire92}.

\begin{theorem}
If $\bu^\ve$ is a bounded sequence in $L^2(\Omega,\cV)$, then there is a subsequence, also denoted by $\bu^\ve$, and a $\bu\in L^2(\Omega,L^2_{\rm per}(Y_\bH,\cV))$ such that 
$$\bu^\ve\rightarrow \bu\qquad \text{locally periodic two-scale as}\ \ \ve\rightarrow 0.$$
\end{theorem}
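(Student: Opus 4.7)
The plan is to follow the classical Nguetseng--Allaire compactness argument, with the locally periodic approximation $\cL^\ve$ playing the role of the rescaling $\bpsi(x,x/\ve)$. For each $\ve>0$, define the linear functional
\begin{equation}
T^\ve(\bpsi) = \int_\Omega \bu^\ve(x)\cdot(\cL^\ve\bpsi)(x)\,dx
\end{equation}
on $L^2(\Omega, C_{\rm per}(Y_\bH, \cV))$. Cauchy--Schwarz together with Proposition \ref{clconv} applied to the scalar integrand $|\bpsi|^2$ yields
\begin{equation}
\limsup_{\ve\to 0}|T^\ve(\bpsi)|^2 \leq \left(\sup_\ve\|\bu^\ve\|_{L^2(\Omega,\cV)}^2\right)\int_\Omega \dashint_{Y_x}|\bpsi(x,y)|^2\,dy\,dx,
\end{equation}
so $\{T^\ve\}$ is uniformly bounded with respect to an $L^2$-type norm on the test space.

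Next, I would exploit separability of $L^2(\Omega, C_{\rm per}(Y_\bH, \cV))$, inherited from the standard separable space $L^2(\Omega, C_{\rm per}(Y, \cV))$ via the bijection $\bpsi \mapsto \bpsi_{\bH^{-1}}$ of \eqref{Hnotation}, to select a countable dense family $\{\bpsi_j\}$. A diagonal extraction produces a subsequence, relabelled $\ve$, along which $T^\ve(\bpsi_j)$ converges for every $j$; the uniform bound above promotes this to convergence $T^\ve(\bpsi)\to T(\bpsi)$ for every $\bpsi$ in the test space. The limit $T$ is bounded with respect to the $L^2(\Omega, L^2_{\rm per}(Y_\bH, \cV))$ norm, hence extends uniquely by density to that larger space, and the Riesz representation theorem supplies a unique $\bu\in L^2(\Omega, L^2_{\rm per}(Y_\bH, \cV))$ with $T(\bpsi)=\int_\Omega\int_{Y_x}\bu(x,y)\cdot\bpsi(x,y)\,dy\,dx$, which is precisely \eqref{lpts}. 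Periodicity of $\bu(x,\cdot)$ in the lattice defining $Y_x$ is inherited from the periodicity of every test function $\bpsi$.

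The principal technical obstacle is obtaining the $L^2$-type bound on $T^\ve$ rather than merely a sup-norm bound. A naive Cauchy--Schwarz applied fibrewise would only yield $|T(\bpsi)|\leq C\|\bpsi\|_{L^2(\Omega, C_{\rm per}(Y_\bH, \cV))}$, insufficient to invoke Riesz representation on the $L^2$ space of the fiber. Applying Proposition \ref{clconv} directly to $|\bpsi|^2$ circumvents this, provided $|\bpsi|^2 \in L^1(\Omega, C_{\rm per}(Y_\bH, \Real))$, which is automatic for $\bpsi \in L^2(\Omega, C_{\rm per}(Y_\bH, \cV))$. A secondary bookkeeping issue, caused by the $x$-dependence of the fiber $Y_x$, is handled by pulling back through $\bH$ using its $C^1$ regularity and pointwise invertibility, which trivializes the bundle and reduces periodicity in $Y_x$ to the familiar $Y$-periodicity under the identifications already set up in Section \ref{sectlptsc}.
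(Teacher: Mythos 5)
The paper does not prove this theorem; it is quoted verbatim from Ptashnyk's work (the paper states that the results of Section \ref{sectlptsc} are taken from \cite{Plpts}), and the proof there is precisely the Nguetseng--Allaire compactness argument you outline: a uniform bound on the functionals $T^\ve$, diagonal extraction over a countable dense family, passage from the crude $L^2(\Omega,C_{\rm per}(Y_\bH,\cV))$ bound (which gives equicontinuity of the $T^\ve$ for the promotion step) to the sharper $L^2(\Omega,L^2_{\rm per}(Y_\bH,\cV))$ bound on the limit via Proposition \ref{clconv} applied to $|\bpsi|^2$, and Riesz representation. Your proposal is correct and matches that approach, including the identification of the key point that the limit functional must be shown continuous in the $L^2$-fiber norm rather than the sup-fiber norm; the only bookkeeping left implicit is the factor $|\det\bH_x|$ relating $\dashint_{Y_x}$ in Proposition \ref{clconv} to $\int_{Y_x}$ in Definition \ref{lptsdef}, which is absorbed into the definition of $\bu$.
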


Let $\cW_\text{per}(Y,\cV)$ denote the set of equivalence classes of $H^1_\text{per}(Y,\cV)$ where two functions are equivalent if they differ by a constant vector in $\cV$.  %This notation naturally extends to function spaces such as $L^2(\Omega,\cW_{\rm per}(Y_\bH,\cV))$.

\begin{theorem}\label{ltscompact}
If $\bu^\ve$ is a bounded sequence in $H^1(\Omega,\cV)$ that converges weakly to $\bu\in H^1(\Omega,\cV)$, then the sequence converges locally periodic two-scale to $\bu$ and there is a subsequence of $\nabla \bu^\ve$, also denoted by $\nabla\bu^\ve$, and a $\hat\bu\in L^2(\Omega,\cW_{\rm per}(Y_\bH,\cV))$ such that
$$\nabla\bu^\ve\rightarrow \nabla \bu+\nabla_y\hat\bu\qquad \text{ locally periodic two-scale as}\ \ \ve\rightarrow 0.$$
\end{theorem}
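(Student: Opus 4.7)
The plan is to adapt the classical Nguetseng--Allaire compactness argument to Ptashnyk's locally periodic framework. Since $\bu^\ve$ is bounded in $H^1(\Omega,\cV)$, both $\bu^\ve$ and $\nabla\bu^\ve$ are bounded in $L^2(\Omega)$, so applying the preceding $L^2$-compactness theorem along a common subsequence yields locally periodic two-scale limits $\bu_0\in L^2(\Omega,L^2_{\rm per}(Y_\bH,\cV))$ of $\bu^\ve$ and $\boldsymbol{\xi}\in L^2(\Omega,L^2_{\rm per}(Y_\bH,\Lin))$ of $\nabla\bu^\ve$.

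To identify $\bu_0=\bu$, I would invoke Rellich's theorem to upgrade the weak $H^1$-convergence to strong $L^2$-convergence $\bu^\ve\to\bu$. For any $\bpsi\in L^2(\Omega,C_{\rm per}(Y_\bH,\cV))$, the approximation $\cL^\ve\bpsi$ is uniformly $L^\infty$-bounded and converges weak-$\ast$ to its $y$-average, a corollary of Proposition~\ref{clconv} applied to products $g\bpsi$ for $g\in C_c(\Omega)$. Pairing strong $L^2$-convergence of $\bu^\ve$ with this weak-$\ast$ convergence of $\cL^\ve\bpsi$ and matching against the defining identity in Definition~\ref{lptsdef} identifies $\bu_0(x,y)=\bu(x)$ as $y$-independent, proving the first assertion for the full sequence.

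The core step is the identification of $\boldsymbol{\xi}$. Fix a smooth test field $\bpsi\in C_c^\infty(\Omega,C^\infty_{\rm per}(Y_\bH,\Lin))$ satisfying $\text{div}_y\bpsi(x,\cdot)=0$ on $Y_x$ for every $x\in\Omega$. I would integrate by parts against $\cL^\ve_\rho\bpsi$, which is smooth and compactly supported in $\Omega$ by construction:
\[
\int_\Omega\nabla\bu^\ve:\cL^\ve_\rho\bpsi\,dx = -\int_\Omega\bu^\ve\cdot\text{div}\,\cL^\ve_\rho\bpsi\,dx.
\]
By Definition~\ref{lptsdef}, augmented by an approximation argument comparing $\cL^\ve_\rho\bpsi$ with $\cL^\ve\bpsi$ (they share the same two-scale limit thanks to \eqref{phi1}--\eqref{phi2}), the left-hand side converges to $\int_\Omega\int_{Y_x}\boldsymbol{\xi}:\bpsi\,dy\,dx$. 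On the right, the chain rule applied to $\bpsi_{\bH^{-1}}(x,\bH^{-1}_{x_k^\ve}(x-\tilde x_k^\ve)/\ve)$ produces three contributions: an $O(1/\ve)$ term from differentiating the inner $y$-argument, an $O(1)$ term from the explicit $x$-derivative, and an $O(\ve^{-\rho})$ term from $\nabla\phi_{\Omega^\ve_k}$. The divergence-free hypothesis, transported through the $\bH_x$-chain rule, reduces the singular $O(1/\ve)$ contribution to $(\text{div}_y\bpsi)(x,y)=0$ up to a patchwise mismatch of size $|\bH^{-1}_{x_k^\ve}-\bH_x^{-1}|=O(\ve^r)$ coming from Lipschitz continuity of $\bH$; the residual terms combine, via strong $L^2$-convergence of $\bu^\ve$ and Proposition~\ref{clconv}, to produce the expected $\nabla\bu$ contribution from integration by parts. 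Equating the two limits yields the orthogonality
\[
\int_\Omega\int_{Y_x}\bigl[\boldsymbol{\xi}(x,y)-\nabla\bu(x)\bigr]:\bpsi(x,y)\,dy\,dx=0
\]
for every such divergence-free $\bpsi$.

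To conclude, the classical Helmholtz decomposition on each cell $Y_x$ shows that the orthogonal complement in $L^2_{\rm per}(Y_x,\Lin)$ of the mean-zero $Y_x$-periodic divergence-free fields is exactly $\nabla_y H^1_{\rm per}(Y_x,\cV)$; hence for a.e.~$x$ there exists $\hat\bu(x,\cdot)\in\cW_{\rm per}(Y_x,\cV)$ with $\boldsymbol{\xi}(x,y)-\nabla\bu(x)=\nabla_y\hat\bu(x,y)$, and a standard measurable-selection argument delivers $\hat\bu\in L^2(\Omega,\cW_{\rm per}(Y_\bH,\cV))$. The principal obstacle is the chain-rule bookkeeping in the preceding paragraph: the $O(1/\ve)$ leading term in $\text{div}\,\cL^\ve_\rho\bpsi$ retains, even after invoking $\text{div}_y\bpsi=0$, a nominal pointwise remainder of size $O(\ve^{r-1})$ due to the mismatch between the patch-constant $\bH^{-1}_{x_k^\ve}$ and the smoothly varying $\bH_x^{-1}$; showing that this remainder averages out against $\bu^\ve$ in the limit---exploiting the $\ve^r$ size of each patch and, where needed, the interplay between the exponents $r$ and $\rho$---is the delicate technical heart of the proof.
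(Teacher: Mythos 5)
First, a point of reference: the paper does not prove Theorem~\ref{ltscompact}. The preamble to Section~\ref{sectlptsc} states that the results there are quoted from Ptashnyk \cite{Plpts}, so there is no internal proof to compare yours against. Your skeleton --- extract locally periodic two-scale limits of $\bu^\ve$ and $\nabla\bu^\ve$, identify the first via Rellich and the weak $L^2$-convergence of $\cL^\ve\bpsi$ to its cell average, and identify the second by testing against $y$-divergence-free fields and invoking the cellwise Helmholtz decomposition --- is the standard Nguetseng--Allaire route and is the right overall strategy for this setting.

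The problem is that the ``core step'' contains two genuinely divergent terms that nothing you invoke controls, and you concede as much by deferring them to ``the delicate technical heart of the proof.'' (i) The cutoff term $\sum_k \bpsi(\cdot)\otimes\nabla\phi_{\Omega_k^\ve}$ enters your computation at order one, with no compensating factor of $\ve$. By \eqref{phi2} and the volume of the transition layers, its $L^2(\Omega)$ norm is of order $\ve^{-(\rho+r)/2}\rightarrow\infty$, and pairing it with $\bu^\ve$, which converges only in $L^2$, yields no smallness. Contrast this with the proof of Theorem~\ref{thmmain}, where the analogous term $\bA_3^\ve$ is harmless precisely because the oscillating test function there is $\ve\cL^\ve_\rho(\psi\bv^1_\bH)$, carrying an explicit prefactor $\ve$ that beats the $\ve^{-\rho}$ bound. (ii) The frame mismatch in the singular term: after imposing $\mathrm{div}_y\bpsi(x,\cdot)=0$ on $Y_x$, the $O(1/\ve)$ contribution leaves a remainder of pointwise size $\ve^{-1}\,|\bH^{-1}_{x_k^\ve}-\bH^{-1}_x|\,\|\nabla_{\bar y}\bpsi\|_\infty = O(\ve^{r-1})$, which blows up since $r<1$. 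Strong $L^2$-convergence of $\bu^\ve$ cannot absorb this unless $\|\bu^\ve-\bu\|_{L^2}=o(\ve^{1-r})$, which is not available, and the remaining pairing of the fixed limit $\bu$ against an $L^2$-unbounded oscillating remainder is covered by none of the convergence results of Section~\ref{sectlptsc}. These two terms are exactly what distinguishes the locally periodic case from the classical periodic one, where the $\ve^{-1}$ term vanishes identically; a correct proof (as in \cite{Plpts}) must be organized so that the singular contributions either vanish patch by patch or appear with a compensating power of $\ve$. As written, the direct transcription of the periodic-case computation does not close, so the proposal is incomplete at its decisive step.
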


The next result concerns the product of two sequences that converge locally periodic two-scale.  As with the previous compactness results, this one has an analog for two-scale convergence.

\begin{lemma}\label{lemstrong}
If $\bu^\ve$ is a sequence in $L^2(\Omega,\cV)$ that converges locally periodic two-scale to $\bu\in L^2(\Omega,L^2(Y_\bH,\cV))$ such that
\beqn\label{lemstrongcond}
\lim_{\ve\rightarrow 0}\norm{\bu^\ve}^2_{L^2(\Omega,\cV)} = \int_\Omega\dashint_{Y_x}\abs{\bu(x,y)}^2\,dydx,
\eeqn
then for any sequence $\bv^\ve$ in $L^2(\Omega,\cV)$ that converges locally periodic two-scale to $\bv\in L^2(\Omega,L^2(Y_\bH,\cV))$ we have
$$\bu^\ve\bv^\ve\rightarrow\dashint_{Y_x}\bu(\cdot,y)\bv(\cdot,y)\,dy\qquad \text{in }\cD'(\Omega)\ \text{as}\ \ve\rightarrow 0.$$
\end{lemma}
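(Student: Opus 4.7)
The plan is to adapt the classical argument of Allaire and Nguetseng for the product of a strongly and a weakly two-scale convergent sequence, using the operator $\cL^\ve$ in place of the usual oscillating test function $\bpsi(x,x/\ve)$. The three stages are: approximate $\bu$ by a smooth test function $\bpsi_\delta$; split $\bu^\ve\cdot \bv^\ve\varphi$ into a ``nice'' term plus a remainder; then exploit the strong-norm hypothesis \eqref{lemstrongcond} to show that the remainder is asymptotically negligible in $L^2(\Omega,\cV)$.

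First, by density choose $\bpsi_\delta \in C(\overline\Omega, C_\text{\rm per}(Y_\bH,\cV))$ with $\int_\Omega \dashint_{Y_x} |\bu(x,y) - \bpsi_\delta(x,y)|^2\,dy\,dx \to 0$ as $\delta \to 0$. For an arbitrary $\varphi \in \cD(\Omega)$, observe directly from \eqref{cLve} the algebraic identity $\cL^\ve(\varphi\bpsi_\delta) = \varphi\cdot\cL^\ve\bpsi_\delta$ (since $\varphi$ depends only on the slow variable), and write
\begin{align*}
\int_\Omega (\bu^\ve \cdot \bv^\ve)\,\varphi\,dx
&= \int_\Omega \bv^\ve \cdot \cL^\ve(\varphi\bpsi_\delta)\,dx \\
&\quad + \int_\Omega (\bu^\ve - \cL^\ve\bpsi_\delta)\cdot \bv^\ve\,\varphi\,dx.
\end{align*}
By the locally periodic two-scale convergence of $\bv^\ve$ applied to the admissible test function $\varphi\bpsi_\delta$, the first term converges as $\ve\to 0$ to $\int_\Omega \varphi(x)\,\dashint_{Y_x}\bv(x,y)\cdot\bpsi_\delta(x,y)\,dy\,dx$.

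For the second (``error'') term, Cauchy-Schwarz yields the bound $\|\varphi\|_\infty\,\|\bv^\ve\|_{L^2}\,\|\bu^\ve - \cL^\ve\bpsi_\delta\|_{L^2}$. Since $\bv^\ve$ is uniformly bounded in $L^2$, it remains to control the last factor. Expanding the square, each of the three resulting terms has a known limit: $\|\bu^\ve\|^2_{L^2}\to \int_\Omega\dashint_{Y_x}|\bu|^2\,dy\,dx$ by hypothesis \eqref{lemstrongcond}; the cross term $\int_\Omega \bu^\ve\cdot\cL^\ve\bpsi_\delta\,dx$ converges by the two-scale convergence of $\bu^\ve$; and $\|\cL^\ve\bpsi_\delta\|^2_{L^2}$ converges by Proposition \ref{clconv}. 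The three limits recombine precisely into $\int_\Omega\dashint_{Y_x}|\bu-\bpsi_\delta|^2\,dy\,dx$, which is made arbitrarily small by choosing $\delta$ small. Letting first $\ve\to 0$ and then $\delta\to 0$, the error vanishes while the main term passes to $\int_\Omega \varphi(x)\,\dashint_{Y_x}\bu(x,y)\cdot\bv(x,y)\,dy\,dx$, as asserted.

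The crux---and the place where hypothesis \eqref{lemstrongcond} is indispensable---is the expansion of $\|\bu^\ve - \cL^\ve\bpsi_\delta\|^2_{L^2}$: the strong-norm equality forces the three limits to telescope exactly into the single difference $\|\bu-\bpsi_\delta\|^2$, after which density in $\bpsi_\delta$ kills the remainder. Without \eqref{lemstrongcond} a defect term would persist and block the argument. The other ingredients---the algebraic identity for $\cL^\ve$, density of smooth periodic functions, and the passage to the limit in the nice term---are routine within the two-scale framework.
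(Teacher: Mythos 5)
The paper itself offers no proof of this lemma: it is one of the results imported verbatim from Ptashnyk \cite{Plpts} (only Propositions \ref{obtsc}--\ref{LL0conv} are flagged as new and proved). Your argument is the standard Allaire--Nguetseng product lemma transposed to the locally periodic setting, with $\cL^\ve\bpsi_\delta$ playing the role of the oscillating test function, and it is essentially correct: the decomposition, the identity $\cL^\ve(\varphi\bpsi_\delta)=\varphi\,\cL^\ve\bpsi_\delta$, the expansion of $\|\bu^\ve-\cL^\ve\bpsi_\delta\|_{L^2}^2$ into three terms whose limits telescope via \eqref{lemstrongcond}, Definition \ref{lptsdef}, and Proposition \ref{clconv}, and the $\ve\to 0$ then $\delta\to 0$ order of limits are all as in the reference.

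One point you should not gloss over: you assert that $\|\bv^\ve\|_{L^2}$ is uniformly bounded ``since $\bv^\ve$ converges locally periodic two-scale.'' Boundedness does not follow from Definition \ref{lptsdef} by a Banach--Steinhaus argument (one only gets an upper bound on the functionals $\bpsi\mapsto\int_\Omega\bv^\ve\cdot\cL^\ve\bpsi\,dx$, not a lower bound by $\|\bv^\ve\|_{L^2}$), and indeed the paper adds uniform $L^2$-boundedness as an explicit extra hypothesis in Corollary \ref{corconv}. So either boundedness must be taken as part of the definition of locally periodic two-scale convergence (as it effectively is in the cited source) or it should be added to the hypotheses; your Cauchy--Schwarz estimate on the error term genuinely needs it. A second, cosmetic remark: the paper is inconsistent between $\int_{Y_x}$ in Definition \ref{lptsdef} and $\dashint_{Y_x}$ in Proposition \ref{clconv} and in the lemma's conclusion (these differ by $|Y_x|=|\det\bH_x|$); your telescoping of the three limits into $\int_\Omega\dashint_{Y_x}|\bu-\bpsi_\delta|^2\,dy\,dx$ tacitly assumes a single consistent normalization, which is the right reading but worth stating.
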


The following results, while not in Ptashnyk \cite{Plpts}, simplify later arguments.

\begin{proposition}\label{obtsc}
For $\bphi\in L^2(\Omega,C_\text{\rm per}(Y_\bH,\cV))\cup C(\overline\Omega,L^2_\text{\rm per}(Y_\bH,\cV))$, we have
\begin{align}
\label{Lconv}\cL^\ve\bphi\rightarrow \bphi\quad \text{locally periodic two-scale as} \quad \ve\rightarrow 0.
%\label{L0conv}\cL^\ve_0\bphi\rightarrow \bphi\quad \text{locally periodic two-scale as} \quad \ve\rightarrow 0.
\end{align}
\end{proposition}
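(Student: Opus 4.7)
The plan is to reduce the locally periodic two-scale convergence of $\cL^\ve\bphi$ to $\bphi$ to the scalar convergence already furnished by Proposition~\ref{clconv}, exploiting the fact that $\cL^\ve$ acts on each patch as pointwise evaluation and is therefore multiplicative. By Definition~\ref{lptsdef}, I must verify that for every $\bpsi\in L^2(\Omega,C_{\rm per}(Y_\bH,\cV))$,
\[
\lim_{\ve\to 0}\int_\Omega(\cL^\ve\bphi)(x)\cdot(\cL^\ve\bpsi)(x)\,dx=\int_\Omega\dashint_{Y_x}\bphi(x,y)\cdot\bpsi(x,y)\,dy\,dx.
\]

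The key observation is the pointwise identity $(\cL^\ve\bphi)(x)\cdot(\cL^\ve\bpsi)(x)=\cL^\ve(\bphi\cdot\bpsi)(x)$ for almost every $x\in\Omega$. This follows directly from \eqref{cLve}: the indicator functions $\chi_{\Omega^\ve_k}$ have pairwise disjoint supports, and on each patch $\Omega^\ve_k$ both $\cL^\ve\bphi$ and $\cL^\ve\bpsi$ are obtained by evaluating $\bphi_{\bH^{-1}}$ and $\bpsi_{\bH^{-1}}$ at the common argument $(x,\bH^{-1}_{x^\ve_k}(x-\tilde x^\ve_k)/\ve)$, so pointwise multiplication distributes through the patch-decomposition. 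The left-hand integral above therefore equals $\int_\Omega\cL^\ve(\bphi\cdot\bpsi)(x)\,dx$, and Proposition~\ref{clconv} with $p=1$ applies whenever $\bphi\cdot\bpsi$ lies in $L^1(\Omega,C_{\rm per}(Y_\bH,\Real))$ or $C(\overline\Omega,L^1_{\rm per}(Y_\bH,\Real))$.

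For smooth test functions $\bpsi\in W^{1,\infty}_0(\Omega,C^\infty_{\rm per}(Y_\bH,\cV))$, which are admissible by the remark following Definition~\ref{lptsdef}, this inclusion holds in both cases: if $\bphi\in L^2(\Omega,C_{\rm per})$, then $\bphi\cdot\bpsi\in L^2(\Omega,C_{\rm per})\subset L^1(\Omega,C_{\rm per})$, and if $\bphi\in C(\overline\Omega,L^2_{\rm per})$, then $\bphi\cdot\bpsi\in C(\overline\Omega,L^2_{\rm per})\subset C(\overline\Omega,L^1_{\rm per})$. Hence the desired convergence holds for smooth test functions. The extension to arbitrary $\bpsi\in L^2(\Omega,C_{\rm per}(Y_\bH,\cV))$ proceeds by density, using Cauchy--Schwarz together with the uniform $L^2(\Omega,\cV)$ bound on $\cL^\ve\bphi$ furnished by Proposition~\ref{clconv} with $p=2$, which yields
\[
\limsup_{\ve\to 0}\|\cL^\ve\bphi\|_{L^2(\Omega,\cV)}^2=\int_\Omega\dashint_{Y_x}|\bphi(x,y)|^2\,dy\,dx<\infty.
\]

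The main obstacle is this density step: the error introduced by exchanging $\bpsi$ for a smooth approximant $\bpsi_\delta$ must be controlled uniformly in $\ve$ by a quantity that vanishes as $\delta\to 0$, both on the discrete-$\ve$ side and on the limiting integral side. Once this is verified, a standard diagonal argument---first $\ve\to 0$ at fixed $\delta$, then $\delta\to 0$---closes the proof. The algebraic core, namely the multiplicativity identity combined with Proposition~\ref{clconv}, is essentially a one-line observation; all of the care lies in confirming the regularity of the product in the two separate function-space settings considered.
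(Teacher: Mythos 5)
Your proof is correct and is essentially the paper's argument: the paper's proof consists precisely of the multiplicativity identity $(\cL^\ve\bphi)\cdot(\cL^\ve\bpsi)=\cL^\ve(\bphi\cdot\bpsi)$, an application of Proposition~\ref{clconv} to the scalar product, and the reduction to smooth test functions via the remark following Definition~\ref{lptsdef}. Your additional discussion of the density step and the uniform $L^2$ bound only spells out what that remark already packages, so nothing further is needed.
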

\begin{proof}
For any $\bpsi\in W^{1,\infty}_0(\Omega,C^\infty_{\rm per}(Y_\bH,\cV))$, by Proposition \ref{clconv} we have
\beqn
\lim_{\ve\rightarrow 0}\int_\Omega(\cL^\ve\bphi)\cdot(\cL^\ve\bpsi)(x)\,dx=\lim_{\ve\rightarrow 0}\int_\Omega\cL^\ve(\bphi\cdot\bpsi)(x)\,dx=\int_\Omega\dashint_{Y_x}\bphi(x,y)\cdot\bpsi(x,y)\,dydx.
\eeqn
Thus, by Definition~\ref{lptsdef}, \eqref{Lconv} holds. 
\end{proof}

\begin{proposition}\label{propneeded}
If $\bv^\ve$ converges locally periodic two-scale, then \eqref{lpts} holds for all $\bpsi\in C(\Omega,L^\infty_\text{\rm per}(Y_\bH,\cV))$.
\end{proposition}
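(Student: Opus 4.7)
The plan is to extend the convergence \eqref{lpts} from the known dense class $W^{1,\infty}_0(\Omega,C^\infty_{\rm per}(Y_\bH,\cV))$ to arbitrary $\bpsi\in C(\Omega,L^\infty_{\rm per}(Y_\bH,\cV))$ by a standard density-plus-Cauchy--Schwarz argument. The two preliminary ingredients I need are (i) uniform $L^2$-boundedness of $\bv^\ve$, and (ii) an $L^2$-density statement that matches Proposition~\ref{clconv}.

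First I would establish that $\sup_\ve\|\bv^\ve\|_{L^2(\Omega,\cV)}<\infty$. A direct estimate from \eqref{cLve} shows $\|\cL^\ve\bpsi\|_{L^2(\Omega)}\leq\|\bpsi\|_{L^2(\Omega,C_{\rm per}(Y_\bH,\cV))}$, so the linear functionals $F^\ve(\bpsi)=\int_\Omega\bv^\ve\cdot(\cL^\ve\bpsi)\,dx$ are continuous on $L^2(\Omega,C_{\rm per}(Y_\bH,\cV))$. Since by hypothesis $F^\ve(\bpsi)$ converges for every such $\bpsi$, Banach--Steinhaus yields $\sup_\ve\|F^\ve\|<\infty$. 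Testing against the $y$-independent choice $\bpsi(x,y)=\bv^\ve(x)$, which lies in $L^2(\Omega,C_{\rm per}(Y_\bH,\cV))$ with $\cL^\ve\bpsi=\bv^\ve$, then forces $\|\bv^\ve\|_{L^2}\leq\|F^\ve\|$, giving the bound.

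Next, given $\bpsi\in C(\overline\Omega,L^\infty_{\rm per}(Y_\bH,\cV))$ and $\eta>0$, I would select $\bpsi_\eta\in W^{1,\infty}_0(\Omega,C^\infty_{\rm per}(Y_\bH,\cV))$ such that
\[
\int_\Omega\dashint_{Y_x}|\bpsi(x,y)-\bpsi_\eta(x,y)|^2\,dy\,dx<\eta^2,
\]
which follows from density of $C^\infty_c(\Omega)$ in $L^2(\Omega)$ combined with density of $C^\infty_{\rm per}(Y)$ in $L^2_{\rm per}(Y)$. Splitting
\[
\int_\Omega\bv^\ve\cdot(\cL^\ve\bpsi)\,dx = \int_\Omega\bv^\ve\cdot(\cL^\ve\bpsi_\eta)\,dx + \int_\Omega\bv^\ve\cdot(\cL^\ve(\bpsi-\bpsi_\eta))\,dx,
\]
the first term tends to $\int_\Omega\int_{Y_x}\bv\cdot\bpsi_\eta\,dy\,dx$ by the density extension of Definition~\ref{lptsdef}. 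For the second, Cauchy--Schwarz together with the $L^2$-bound from the first step and Proposition~\ref{clconv} (applied to $\bpsi-\bpsi_\eta\in C(\overline\Omega,L^2_{\rm per}(Y_\bH,\cV))$) yields
\[
\limsup_{\ve\to 0}\Big|\int_\Omega\bv^\ve\cdot(\cL^\ve(\bpsi-\bpsi_\eta))\,dx\Big|\leq C\eta.
\]
A parallel Cauchy--Schwarz estimate bounds $|\int_\Omega\int_{Y_x}\bv\cdot(\bpsi-\bpsi_\eta)\,dy\,dx|$ by $C\eta$. Sending $\ve\to 0$ first and then $\eta\to 0$ gives \eqref{lpts} for $\bpsi$.

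The main obstacle is the first step: the hypothesis only provides pointwise convergence of $F^\ve$ on the dense subspace, not on all of $L^2(\Omega,C_{\rm per}(Y_\bH,\cV))$, so the Banach--Steinhaus reduction must be set up against the full space to extract the uniform operator bound, and one must verify the explicit test choice $\bpsi=\bv^\ve$ is admissible. Once that bound is in hand, the remainder is the routine density/truncation scheme controlled by Proposition~\ref{clconv}.
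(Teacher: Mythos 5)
Your proof is correct, but it takes a genuinely different route from the paper's. The paper disposes of this in three lines: by Proposition~\ref{obtsc} the approximation $\cL^\ve\bpsi$ itself converges locally periodic two-scale to $\bpsi$, by Proposition~\ref{clconv} it satisfies the norm condition \eqref{lemstrongcond}, and then Lemma~\ref{lemstrong} (the ``strong times weak'' product lemma) applied to the pair $(\cL^\ve\bpsi,\bv^\ve)$ yields the limit. You instead re-derive that product mechanism by hand: first you extract the uniform bound $\sup_\ve\|\bv^\ve\|_{L^2}<\infty$ from Definition~\ref{lptsdef} via Banach--Steinhaus on the full space $L^2(\Omega,C_{\rm per}(Y_\bH,\cV))$ together with the self-testing choice $\bpsi(x,y)=\bv^\ve(x)$ (for which $\cL^\ve\bpsi=\bv^\ve$), and then you run the classical density/Cauchy--Schwarz scheme, using Proposition~\ref{clconv} only to control $\|\cL^\ve(\bpsi-\bpsi_\eta)\|_{L^2}$. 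Both arguments are sound. The paper's is shorter because Lemma~\ref{lemstrong} already packages the key estimate; yours is more self-contained, avoids the (slightly delicate) passage from the $\cD'(\Omega)$ convergence in Lemma~\ref{lemstrong} to convergence of the integral over all of $\Omega$, and your Banach--Steinhaus step is a worthwhile observation in its own right --- it shows that the uniform $L^2$ bound imposed as an explicit hypothesis in Corollary~\ref{corconv} is in fact automatic for any locally periodic two-scale convergent family. Two minor caveats: your density step implicitly requires $\bpsi$ to extend continuously (or at least boundedly) to $\overline\Omega$ so that Proposition~\ref{clconv} applies to $\bpsi-\bpsi_\eta$, though the paper's own statement wavers between $C(\Omega,\cdot)$ and $C(\overline\Omega,\cdot)$ in exactly the same way; and you should be consistent about the normalization $\int_{Y_x}$ versus $\dashint_{Y_x}$ on the right-hand side, an inconsistency also present in the source.
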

\begin{proof}
For any $\bpsi\in C(\Omega,L^\infty_\text{\rm per}(Y_\bH,\cV))$, by Proposition \ref{obtsc}, $\cL^\ve\bpsi$ locally periodic two-scale converges to $\bpsi$.  Moreover, by Proposition \ref{clconv}, \eqref{lemstrongcond} holds with $\bu^\ve$ replaced by $\cL^\ve\bpsi$.  Thus we can apply Lemma \ref{lemstrong} to obtain
\beqn
\lim_{\ve\rightarrow 0}\int_\Omega\bv^\ve(x)\cdot(\cL^\ve\bpsi)(x)\, dx = \int_\Omega\dashint_{Y_x}\bv(x,y)\cdot\bpsi(x,y)\,dydx.
\eeqn
\end{proof}

\begin{proposition}\label{LL0conv}
For all $\bpsi\in C(\overline\Omega,L^\infty_\text{\rm per}(Y_\bH,\cV))$, we have
\beqn
\lim_{\ve\rightarrow 0}\|\cL^\ve \bpsi-\cL^\ve_0\bpsi\|_{L^2(\Omega)}=0.
\eeqn
\end{proposition}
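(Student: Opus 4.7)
The plan is to exploit the fact that $\cL^\ve\bpsi$ and $\cL^\ve_0\bpsi$ have identical fast arguments on each patch $\Omega^\ve_k$ and differ only in whether the slow argument is $x$ or $x^\ve_k$, combined with uniform continuity of $x\mapsto\bpsi_{\bH^{-1}}(x,\cdot)$ on the compact set $\overline\Omega$.

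First I would write, for a.e.\ $x\in\Omega$,
\beqn
(\cL^\ve\bpsi-\cL^\ve_0\bpsi)(x)=\sum_{k\in I^\ve}\bigl[\bpsi_{\bH^{-1}}(x,z^\ve_k(x))-\bpsi_{\bH^{-1}}(x^\ve_k,z^\ve_k(x))\bigr]\chi_{\Omega^\ve_k}(x),
\eeqn
where $z^\ve_k(x):=\bH^{-1}_{x^\ve_k}(x-\tilde x^\ve_k)/\ve$, so the fast variables in both terms agree and only the slow variable differs.

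Next I would invoke the definition of the space $C(\overline\Omega,L^\infty_{\rm per}(Y_\bH,\cV))$ via the bijection with $C(\overline\Omega,L^\infty_{\rm per}(Y,\cV))$: the hypothesis $\bpsi\in C(\overline\Omega,L^\infty_{\rm per}(Y_\bH,\cV))$ gives $\bpsi_{\bH^{-1}}\in C(\overline\Omega,L^\infty_{\rm per}(Y,\cV))$, hence by compactness of $\overline\Omega$ this map is uniformly continuous. Thus, given $\eta>0$, there is $\delta>0$ such that $|x-x'|<\delta$ implies $\|\bpsi_{\bH^{-1}}(x,\cdot)-\bpsi_{\bH^{-1}}(x',\cdot)\|_{L^\infty_{\rm per}(Y,\cV)}<\eta$.

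Since each patch has diameter of order $\ve^r$, for $x\in\Omega^\ve_k$ we have $|x-x^\ve_k|\leq\sqrt{n}\,\ve^r$. Choose $\ve_0>0$ with $\sqrt n\,\ve_0^{\,r}<\delta$; then for all $\ve<\ve_0$, every $k\in I^\ve$, and a.e.\ $x\in\Omega^\ve_k$,
\beqn
\bigl|\bpsi_{\bH^{-1}}(x,z^\ve_k(x))-\bpsi_{\bH^{-1}}(x^\ve_k,z^\ve_k(x))\bigr|\leq\|\bpsi_{\bH^{-1}}(x,\cdot)-\bpsi_{\bH^{-1}}(x^\ve_k,\cdot)\|_{L^\infty_{\rm per}(Y,\cV)}<\eta.
\eeqn
Since the patches $\Omega^\ve_k$ are disjoint, this yields the pointwise a.e.\ bound $|(\cL^\ve\bpsi-\cL^\ve_0\bpsi)(x)|<\eta$ on $\Omega$, hence $\|\cL^\ve\bpsi-\cL^\ve_0\bpsi\|_{L^2(\Omega)}\leq\eta\sqrt{|\Omega|}$ for all $\ve<\ve_0$. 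Letting $\eta\to 0$ gives the claim.

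The argument is essentially mechanical; the only thing that requires care is the interpretation of uniform continuity as a pointwise a.e.\ bound in the $y$-variable, which is legitimate because the $L^\infty$ norm controls the essential supremum and the final conclusion is in $L^2(\Omega)$, so pointwise exceptional sets of measure zero do not matter. No genuine obstacle presents itself.
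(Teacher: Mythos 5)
Your proposal is correct and follows essentially the same route as the paper: both decompose the difference patchwise with identical fast arguments, invoke (equi-)uniform continuity of $x\mapsto\bpsi_{\bH^{-1}}(x,\cdot)$ from compactness of $\overline\Omega$, and use that the patch diameter $O(\ve^r)$ falls below the modulus-of-continuity threshold to get the bound $\eta\sqrt{|\Omega|}$. Your extra remark about the $y$-exceptional null set is a fine point the paper leaves implicit, but it changes nothing.
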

\begin{proof}
By the definition of $\cL^\ve$ and $\cL^\ve_0$,
\beqn
\|\cL^\ve \bpsi-\cL^\ve_0\bpsi\|_{L^2(\Omega)}^2=\sum_{k\in I^\ve}\int_{\Omega_k^\ve\cap\Omega}|\bpsi_{\bH^{-1}}(x,\bH^{-1}_{x_k^\ve}(x-\tilde x_k^\ve)/\ve)-\bpsi_{\bH^{-1}}(x_k^\ve,\bH^{-1}_{x_k^\ve}(x-\tilde x_k^\ve)/\ve)|^2\, dx.
\eeqn
Since $\bpsi_{\bH^{-1}}\in C(\overline\Omega,L^\infty_\text{\rm per}(Y,\cV))$ and $\overline\Omega$ is compact, the family of functions $x\mapsto\bpsi_{\bH^{-1}}(x,y)$, indexed by $y\in Y$, is equi-uniformly continuous.  Thus, given $\xi>0$ there is an $\delta>0$ such that 
\beqn
\text{if }|x_1-x_2|<\delta,\text{ then } |\bpsi_{\bH^{-1}}(x_1,y)-\bpsi_{\bH^{-1}}(x_2,y)|<\sqrt{\xi}\qquad\text{for a.e. }y\in Y.
\eeqn
Consider $\ve$ small enough so that if $x_1,x_2\in\Omega_k^\ve$, then $|x_1-x_2|<\delta$.  It follows that for such $\ve$,
\beqn
\|\cL^\ve \bpsi-\cL^\ve_0\bpsi\|_{L^2(\Omega)}^2\leq \sum_{k\in I^\ve}\int_{\Omega_k^\ve\cap\Omega} \xi\, dx=\xi |\Omega|.
\eeqn
This proves the desired limit.
\end{proof}

\noindent Combining Propositions \ref{propneeded} and \ref{LL0conv} we obtain the next result.

\begin{corollary}\label{corconv}
If $\bv^\ve$ converges locally periodic two-scale and $\|\bv^\ve\|_{L^2(\Omega)}$ is bounded uniformly in $\ve$, then for all $\bpsi\in C(\Omega,L^\infty_\text{\rm per}(Y_\bH,\cV))$
\beqn
\lim_{\ve\rightarrow 0}\int_\Omega \bv^\ve(x)\cdot(\cL^\ve_0\bpsi)(x)\, dx = \int_\Omega\int_{Y_x}\bv(x,y)\cdot\bpsi(x,y)\, dydx.
\eeqn
\end{corollary}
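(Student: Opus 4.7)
The plan is to prove this as a genuine corollary of the two immediately preceding results by splitting
\[
\int_\Omega \bv^\ve(x)\cdot(\cL^\ve_0\bpsi)(x)\,dx = \int_\Omega \bv^\ve(x)\cdot(\cL^\ve\bpsi)(x)\,dx + \int_\Omega \bv^\ve(x)\cdot\big((\cL^\ve_0\bpsi)(x)-(\cL^\ve\bpsi)(x)\big)\,dx,
\]
and showing that the first term converges to the desired limit while the second vanishes.

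For the first term, I would simply invoke Proposition \ref{propneeded} applied to the test function $\bpsi$, which directly yields
\[
\lim_{\ve\to 0}\int_\Omega \bv^\ve(x)\cdot(\cL^\ve\bpsi)(x)\,dx = \int_\Omega\int_{Y_x}\bv(x,y)\cdot\bpsi(x,y)\,dydx.
\]
(The hypotheses of Proposition \ref{propneeded} are exactly the hypotheses on $\bv^\ve$ and $\bpsi$ that are assumed here.)

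For the second term, I would apply the Cauchy--Schwarz inequality to get
\[
\Big|\int_\Omega \bv^\ve(x)\cdot\big((\cL^\ve_0\bpsi)(x)-(\cL^\ve\bpsi)(x)\big)\,dx\Big| \le \|\bv^\ve\|_{L^2(\Omega,\cV)}\,\|\cL^\ve_0\bpsi-\cL^\ve\bpsi\|_{L^2(\Omega)}.
\]
The first factor is uniformly bounded by the hypothesis on $\bv^\ve$, while the second factor tends to $0$ by Proposition \ref{LL0conv}, so the product tends to $0$. Combining the two steps yields the claim.

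There is no real obstacle here; the only point requiring a second's thought is a compatibility check on the class of test functions---Proposition \ref{LL0conv} is stated for $\bpsi\in C(\overline\Omega,L^\infty_{\rm per}(Y_\bH,\cV))$, and I would note that this is the class intended in the corollary statement (or, alternatively, that the equi-uniform-continuity argument in the proof of Proposition \ref{LL0conv} relies only on continuity of $\bpsi_{\bH^{-1}}(\cdot,y)$ on compact subsets, which is all that is needed once one localizes via the patches $\Omega^\ve_k\cap\Omega$). With this verification, the proof is essentially three lines.
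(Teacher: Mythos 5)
Your proof is correct and is exactly the argument the paper intends: the corollary is stated as obtained by ``combining Propositions \ref{propneeded} and \ref{LL0conv}'', and the combination is precisely your split of $\cL^\ve_0\bpsi$ into $\cL^\ve\bpsi$ plus a difference controlled by Cauchy--Schwarz, the uniform $L^2$ bound on $\bv^\ve$, and Proposition \ref{LL0conv}. Your side remark about the test-function class ($C(\Omega,\cdot)$ versus $C(\overline\Omega,\cdot)$) is a discrepancy already present in the paper's own statements, so no further action is needed.
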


\section{Homogenization result}\label{sectHom}

In this section we formulate the precise assumptions necessary to homogenize the elasticity equation \eqref{dispbvp}.  The resulting macroscipic equation involves a macroscopic residual stress and elasticity tensor which are determined by solving unit cell probelms.

Let $\Omega$ be an open, bounded, Lipschitz domain and $Y$ a parallelepiped with unit volume.  We make the following assumptions.\\

\vspace{-.1in}
\noindent \textbf{Assumptions}\vspace{-.1in}
\begin{enumerate}
\item $\bH,\bK\in C^1(\overline\Omega,\Lin)$ and $\bH(x)$ is invertible for all $x\in\overline\Omega$\vspace{-.1in}
\item $\hat\bS_r\in C^1(\Lin,L^\infty_\text{per}(Y,\Lin))$\vspace{-.1in}%:\Lin\times\overline\Omega\rightarrow\Lin$ {\color{blue} regularity}\vspace{-.1in}
\item $\hat\bbS\in C^1(\Lin(\Lin)\times\Lin,\Lin(\Lin))$\vspace{-.1in}
\item $\bbC\in L^\infty_\text{per}(Y,\Lin(\Lin))$\vspace{-.1in}
\item if $\bbT\in\Lin(\Lin)$ is either $\bbC(y)$ or $\hat\bbS(\bbC(y),\bK_x)$ for a.e.~$y\in Y$ and $x\in\Omega$, then $\bbT$ satisfies the following conditions:\vspace{-.1in}
\begin{enumerate}
%\item $|\bbC|$ is bounded in $L^\infty(Y)$
\item there is a $\alpha>0$ (independent of $y$ and $x$) such that $\alpha|\bE|^2\leq\bE\cdot\bbT\bE$ for all $\bE\in \Sym$
\item $\bA\cdot\bbT\bB = \bA\cdot\bbT\big[\frac{1}{2}(\bB + \bB^\trans)\big]=\big[\frac{1}{2}(\bA + \bA^\trans)\big]\cdot\bbT\bB$ for all $\bA,\bB\in\Lin$
\item $\bD\cdot\bbT\bE=\bE\cdot\bbT\bD$ for all $\bD,\bE\in\Sym$\vspace{-.1in}
\end{enumerate}
\item $\bb\in L^2(\Omega,\Real^n)$ and $\bu_\circ\in L^2(\partial\Omega,\Real^n)$
\end{enumerate}

\noindent Notice it is not assumed that $\hat\bS_r$ and $\hat\bbS$ are of the form \eqref{Ssp} and \eqref{bbSsp}, respectively.  The following result follows from the Lax--Milgrim theorem using standard arguments and, hence, the proof is omitted.

\begin{proposition}\label{euprop}
For every $\ve>0$, define $\bS^\ve_r$ and $\bbC^\ve$ according to \eqref{YStrans} and \eqref{YCtrans}, respectively.  Under Assumptions 1--6, the displacement problem \eqref{dispbvp} has a unique solution in $H^1(\Omega,\Real^n)$.  Moreover, the solution $\bu^\ve$ is bounded in $H^1(\Omega,\Real^n)$ independent of $\ve$.
\end{proposition}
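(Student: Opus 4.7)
The plan is to recast \eqref{dispbvp} as a linear variational problem and apply the Lax--Milgram theorem with constants independent of $\ve$. First I would pick a lift $\bU_\circ\in H^1(\Omega,\Real^n)$ of the boundary datum $\bu_\circ$ (implicitly assuming the $L^2$-hypothesis on $\bu_\circ$ is strengthened to membership in the $H^{1/2}$ trace space, which is the only way the pointwise Dirichlet condition can be given meaning for an $H^1$-solution), set $\bw^\ve:=\bu^\ve-\bU_\circ\in H^1_0(\Omega,\Real^n)$, test the PDE against $\bv\in H^1_0(\Omega,\Real^n)$, and integrate by parts. This reduces \eqref{dispbvp} to finding $\bw^\ve\in H^1_0(\Omega,\Real^n)$ with $a^\ve(\bw^\ve,\bv)=L^\ve(\bv)$ for all $\bv\in H^1_0(\Omega,\Real^n)$, where
$$a^\ve(\bw,\bv):=\int_\Omega \bbC^\ve(x)\nabla\bw\cdot\nabla\bv\,dx,\qquad L^\ve(\bv):=\int_\Omega\bb\cdot\bv\,dx-\int_\Omega\bigl(\bS_r^\ve+\bbC^\ve\nabla\bU_\circ\bigr)\cdot\nabla\bv\,dx.$$

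Next I would verify the three Lax--Milgram hypotheses with $\ve$-uniform constants. Continuity of $a^\ve$ follows from the uniform bound $\|\bbC^\ve\|_{L^\infty(\Omega,\Lin(\Lin))}\leq C$, which is immediate from Assumption~4 combined with the continuity of $\hat\bbS$ (Assumption~3) and the compactness of $\bK(\overline\Omega)$ (Assumption~1). For coercivity I would use Assumption~5(b) to replace $\nabla\bw$ by its symmetric part $\bE(\bw):=\tfrac{1}{2}(\nabla\bw+\nabla\bw^\trans)$ inside the form, then apply 5(a) pointwise to get $a^\ve(\bw,\bw)\geq \alpha\int_\Omega|\bE(\bw)|^2\,dx$, and finally invoke Korn's inequality on $H^1_0(\Omega,\Real^n)$ to conclude $a^\ve(\bw,\bw)\geq c\|\bw\|_{H^1(\Omega,\Real^n)}^2$ with $c$ independent of $\ve$. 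Continuity of $L^\ve$ uses Assumption~6 on $\bb$, the $H^1$-regularity of $\bU_\circ$, and the uniform bound $\|\bS_r^\ve\|_{L^\infty(\Omega,\Lin)}\leq C$ that follows from Assumptions~1--2.

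With these three ingredients in hand, Lax--Milgram delivers a unique $\bw^\ve\in H^1_0(\Omega,\Real^n)$, and hence a unique $\bu^\ve=\bw^\ve+\bU_\circ\in H^1(\Omega,\Real^n)$. The usual a priori estimate gives $\|\bw^\ve\|_{H^1(\Omega,\Real^n)}\leq c^{-1}\|L^\ve\|$; since the constants controlling $\|L^\ve\|$ and $c$ are both $\ve$-independent, so is the resulting bound on $\|\bu^\ve\|_{H^1(\Omega,\Real^n)}$.

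The only nonroutine step is establishing coercivity, where one must combine the pointwise ellipticity in 5(a), the symmetrization identity in 5(b), and Korn's inequality on $H^1_0$; every other ingredient reduces to checking $\ve$-uniform $L^\infty$ bounds on the piecewise-defined coefficients $\bS_r^\ve$ and $\bbC^\ve$, and these are immediate from the continuity hypotheses on $\hat\bS_r$, $\hat\bbS$, $\bH$, $\bK$ together with the $L^\infty$-hypothesis on $\bbC$.
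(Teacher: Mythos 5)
Your argument is precisely the standard Lax--Milgram reasoning that the paper invokes and explicitly omits: lift the boundary datum, verify $\ve$-uniform continuity of $a^\ve$ and $L^\ve$ from the $L^\infty$ bounds on the piecewise coefficients, obtain coercivity from Assumptions 5(a)--(b) together with Korn's inequality on $H^1_0(\Omega,\Real^n)$, and read off the $\ve$-independent a priori bound. Your parenthetical observation that Assumption 6 should place $\bu_\circ$ in the trace space $H^{1/2}(\partial\Omega,\Real^n)$ rather than merely $L^2(\partial\Omega,\Real^n)$ is a correct and worthwhile remark about the paper's hypotheses.
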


For the remainder of the paper, $\bar y$ will denote a typical element of $Y$ while $y$ will denote a typical element of $Y_x=\bH_x(Y)$ where $x\in\Omega$.  With this convention, $\nabla_{\bar y}$ denotes the gradient with respect to a variable in $Y$ and $\nabla_y$ denotes the gradient with respect to a variable in $Y_x$.  We are now in a position to state the main result.

\begin{theorem}\label{thmmain}
Under Assumptions 1--6, the solutions $\bu^\ve$ of \eqref{dispbvp} converge weakly in $H^1(\Omega,\Real^n)$ to a function $\bu$ which is the unique solution of
\beqn\label{dispbvphom}
\left \{
\begin{array}{ll}
\text{\rm div}(\bS_{r\text{\rm hom}}+\bbC_\text{\rm hom}\nabla\bu)+\bb=\bzero\qquad &\text{in}\ \Omega, \\[10pt]
\bu = \bu_\circ\qquad &\text{on}\ \partial\Omega,
\end{array}
\right.
\eeqn
where for $x\in\Omega$,
\begin{align}
\label{Srhom}\bS_{r\text{\rm hom}}(x)&=\dashint_{Y}\Big(\hat\bS_r(\bK_x,\bar y)+\hat\bbS(\bbC(\bar y),\bK_x)[\bH^{-\trans}_x\nabla_{\bar y}\hat\bw^\bzero(x,\bar y)\bH^{-1}_x]\Big)\, d\bar y\\
\label{Chom}\bbC_\text{\rm hom}(x)\bE&=\dashint_{Y}\hat\bbS(\bbC(\bar y),\bK_x)\big[\bE+\bH^{-\trans}_x\nabla_{\bar y}\bw^\bE(x,\bar y)\bH^{-1}_x\big]\, d\bar y\qquad \text{for all}\ \bE\in\Sym
\end{align}
and $\hat\bw^\bzero(x,\cdot)\in \cW_\text{\rm per}(Y,\Real^n)$ is the unique solution of 
\beqn\label{ucp0}
%\left \{
%\begin{array}{ll}
\displaystyle\text{\rm div}_{\bar y}\Big\{\bH^{-1}_x\Big(\hat\bS_r(\bK_x,\cdot)+\hat\bbS(\bbC(\cdot),\bK_x)\big[\bH_x^{-\trans}\nabla_{\bar y}\hat\bw^\bzero(x,\cdot)\bH^{-1}_x\big]\Big)\bH^{-\trans}_x\Big\}=\bzero\quad \text{in}\ Y %\\[10pt]
%\displaystyle\hat\bw^\bzero(x,\cdot)\ \ \ Y\text{-periodic},\quad \dashint_Y\hat\bw^\bzero(x,\bar y)\, d\bar y=\bzero
%\end{array}
%\right.
\eeqn
and $\bw^\bE(x,\cdot)\in \cW_\text{\rm per}(Y,\Real^n)$ is the unique solution of
\beqn\label{ucpE}
%\left \{
%\begin{array}{ll}
\displaystyle\text{\rm div}_{\bar y}\Big(\bH^{-1}_x\hat\bbS(\bbC(\cdot),\bK_x)\big[\bE+\bH_x^{-\trans}\nabla_{\bar y}\bw^\bE(x,\cdot)\bH^{-1}_x\big]\bH^{-\trans}_x\Big)=\bzero\quad \text{in}\ Y. %\\[10pt]
%\displaystyle\bw^\bE(x,\cdot)\ \ \ Y\text{-periodic},\quad \dashint_Y\bw^\bE(x,\bar y)\, d\bar y=\bzero.
%\end{array}
%\right.
\eeqn
\end{theorem}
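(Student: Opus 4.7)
The plan is to adapt the standard two-scale homogenization argument to the locally periodic framework of Ptashnyk, using the apparatus of Section \ref{sectlptsc}.

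First I would extract convergent subsequences. By Proposition \ref{euprop} the family $\{\bu^\ve\}$ is uniformly bounded in $H^1(\Omega,\Real^n)$, so a subsequence satisfies $\bu^\ve\rightharpoonup\bu$ weakly in $H^1$ with $\bu=\bu_\circ$ on $\partial\Omega$ by trace continuity, and by Theorem \ref{ltscompact} a further subsequence of $\nabla\bu^\ve$ converges locally periodic two-scale to $\nabla\bu(x)+\nabla_y\hat\bu(x,y)$ for some $\hat\bu\in L^2(\Omega,\cW_\text{per}(Y_\bH,\Real^n))$. The identification that bridges the coefficients with the machinery of Section \ref{sectlptsc} is that $\bS_r^\ve=\cL^\ve_0\bpsi_1$ and $\bbC^\ve=\cL^\ve_0\bpsi_2$, where $\bpsi_1(x,y)=\hat\bS_r(\bK_x,\bH_x^{-1}y)$ and $\bpsi_2(x,y)=\hat\bbS(\bbC(\bH_x^{-1}y),\bK_x)$ lie in $C(\overline\Omega,L^\infty_\text{per}(Y_\bH,\cdot))$ by Assumptions 1--4.

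Next I would test the weak form of \eqref{dispbvp} against $\bv^\ve=\bphi+\ve\cL^\ve_\rho\bpsi$ with $\bphi\in C^\infty_c(\Omega,\Real^n)$ and $\bpsi\in W^{1,\infty}_0(\Omega,C^\infty_\text{per}(Y_\bH,\Real^n))$. A direct computation of $\nabla\bv^\ve$ produces three contributions: an $x$-derivative of $\bpsi_{\bH^{-1}}$ that is $O(\ve)$ in $L^2$ after the $\ve$ prefactor, the intended microscale $y$-derivative term $\cL^\ve_\rho(\nabla_y\bpsi\,\bH_{x_k^\ve}^{-1})$, and a piece proportional to $\nabla\phi_{\Omega_k^\ve}$ that is $O(\ve^{1-\rho})$ in $L^2$ by \eqref{phi2} and $\rho<1$. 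Passing to the limit in
\beqn
\int_\Omega(\bS_r^\ve+\bbC^\ve\nabla\bu^\ve)\cdot\nabla\bv^\ve\,dx=\int_\Omega\bb\cdot\bv^\ve\,dx
\eeqn
via Corollary \ref{corconv} to pair the locally periodic two-scale convergent $\nabla\bu^\ve$ with the $\cL^\ve_0$-approximations of the coefficients, and Lemma \ref{lemstrong} to handle the quadratic oscillating product, yields
\beqn
\int_\Omega\dashint_{Y_x}\big[\bpsi_1(x,y)+\bpsi_2(x,y)(\nabla\bu(x)+\nabla_y\hat\bu(x,y))\big]\cdot\big[\nabla\bphi(x)+\nabla_y\bpsi(x,y)\big]\,dy\,dx=\int_\Omega\bb\cdot\bphi\,dx,
\eeqn
which extends by density to $\bphi\in H^1_0(\Omega,\Real^n)$ and $\bpsi\in L^2(\Omega,H^1_\text{per}(Y_\bH,\Real^n))$.

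Third I would split this identity into a cell problem and a macroscopic equation. Setting $\bphi=\bzero$ and localizing in $x$, changing variables $y=\bH_x\bar y$ on $Y_x$, and introducing the transformed corrector $\hat\bw(x,\bar y)=\bH_x^\trans\hat\bu(x,\bH_x\bar y)\in L^2(\Omega,\cW_\text{per}(Y,\Real^n))$ so that $\nabla_y\hat\bu=\bH_x^{-\trans}\nabla_{\bar y}\hat\bw\,\bH_x^{-1}$, with the analogous substitution for the microscale test function, I rearrange via $\bA\cdot\bB\,\bH_x^{-1}=\bA\,\bH_x^{-\trans}\cdot\bB$ to arrive at the weak form of
\beqn
\text{div}_{\bar y}\big\{\bH_x^{-1}\big[\hat\bS_r(\bK_x,\bar y)+\hat\bbS(\bbC(\bar y),\bK_x)[\nabla\bu(x)+\bH_x^{-\trans}\nabla_{\bar y}\hat\bw\,\bH_x^{-1}]\big]\bH_x^{-\trans}\big\}=\bzero\ \ \text{in}\ Y.
\eeqn
Linearity of $\hat\bbS(\bbC(\bar y),\bK_x)[\,\cdot\,]$ and the minor symmetry (Assumption 5(b)) yield the decomposition $\hat\bw(x,\cdot)=\hat\bw^\bzero(x,\cdot)+\bw^{\nabla\bu(x)}(x,\cdot)$, recovering the cell problems \eqref{ucp0} and \eqref{ucpE}; Lax--Milgram on $\cW_\text{per}(Y,\Real^n)$ gives unique solvability since Assumption 5(a) propagates through the $\bH_x$-conjugation. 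Setting $\bpsi=\bzero$ and again changing variables on $Y_x$ in the limit identity produces the weak form of \eqref{dispbvphom} with $\bS_{r\text{hom}}$ and $\bbC_\text{hom}$ as in \eqref{Srhom}--\eqref{Chom}. Coercivity of $\bbC_\text{hom}$, inherited from Assumption 5(a) through the standard Rayleigh-quotient/cell-problem argument, provides uniqueness of $\bu$ via Lax--Milgram and hence convergence of the full sequence.

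The main obstacle is the passage to the limit in the quadratic oscillatory term $\int_\Omega\bbC^\ve\nabla\bu^\ve\cdot\cL^\ve_\rho(\nabla_y\bpsi\,\bH_x^{-1})\,dx$, where both factors oscillate on the microscale: resolving it requires combining the $\cL^\ve_0$-identification of $\bbC^\ve$, the $\cL^\ve$-versus-$\cL^\ve_0$ interchange from Proposition \ref{LL0conv}, and the product structure of Lemma \ref{lemstrong}. A secondary technical point is the algebraic bookkeeping of the $\bH_x$-conjugations needed to convert the cell problem, which naturally lives on $Y_x$, into the stated form on the fixed reference cell $Y$.
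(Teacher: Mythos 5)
Your proposal is correct and follows essentially the same route as the paper: uniform $H^1$ bounds plus the locally periodic two-scale compactness of Theorem \ref{ltscompact}, the identification $\bS_r^\ve=\cL^\ve_0(\bS_{r\bH})$, $\bbC^\ve=\cL^\ve_0(\bbC_\bH)$, oscillating test functions of the form $\bv^0+\ve\cL^\ve_\rho(\psi\bv^1_\bH)$ with the same three-way splitting of the gradient and the same use of \eqref{phi1}--\eqref{phi2} to kill the error terms, passage to the limit via Corollary \ref{corconv} (with the coefficient moved onto the test function by the symmetry in Assumption 5), and finally the change of variables $y=\bH_x\bar y$ with the affine decomposition $\hat\bw^\bE=\hat\bw^\bzero+\bw^\bE$ of the corrector. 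The only cosmetic difference is that you write the corrector substitution as $\hat\bw=\bH_x^\trans\hat\bu(\cdot,\bH_x\bar y)$ rather than the paper's equivalent $\hat\bu(x,\bH_x\bar y)=\bH_x^{-\trans}\hat\bw^{\nabla\bu(x)}(x,\bar y)$.
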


\begin{proof}
From Proposition \ref{euprop}, $\bu^\ve$ is bounded in the $H^1$-norm independent of $\ve$.  It follows from Theorem \ref{ltscompact} that there is a $\bu\in H^1(\Omega,\Real^n)$ and $\hat\bu\in L^2(\Omega, \cW_\text{per}(Y_\bH,\Real^n))$ such that along a subsequence (still denoted by $\bu^\ve$)
\beqn
\nabla\bu^\ve\rightarrow \nabla \bu+\nabla_y\hat\bu\qquad \text{\rm locally periodic two-scale as}\ \ \ve\rightarrow 0.
\eeqn
Notice that, using the notation \eqref{Hnotation}, $\bS_r^\ve=\cL^\ve_0(\bS_{r\bH})$ and $\bbC^\ve=\cL^\ve_0(\bbC_\bH)$ where
\beqn
\bS_r(x,\bar y)=\hat\bS_r(\bK_x,\bar y)\quad \text{and}\quad \bbC(x,\bar y)=\hat\bbS(\bbC(\bar y),\bK_x)\rfa (x,\bar y)\in \Omega\times Y.
\eeqn
Multiply \eqref{dispbvp} by the test function $\bv=\bv^0+\ve\cL^\ve_\rho(\psi\bv^1_\bH)$, where $\bv^0\in C^\infty_0(\Omega,\Real^n)$, $\bv^1\in C^\infty_\text{per}(Y,\Real^n)$, and $\psi\in C^\infty_0(\Omega,\Real)$, integrate over $\Omega$ and integrate by parts to obtain
\beqn\label{balveor}
\int_\Omega\Big[\cL^\ve_0(\bS_{r\bH})+\cL^\ve_0(\bbC_\bH)\nabla\bu^\ve\Big]\cdot\Big[\nabla\bv^0+\ve\nabla\big(\cL_\rho^\ve(\psi\bv^1_\bH)\big)\Big]\dv + \int_\Omega\bb\cdot\Big(\bv^0+\ve\cL^\ve_\rho(\psi\bv^1_\bH)\Big)\dv=0.
\eeqn
We want to compute the limit of \eqref{balveor} as $\ve$ goes to zero.  This will be done by considering each term individually.  Begin by expanding the first term in \eqref{balveor} to obtain
\begin{multline}\label{balveexp}
\int_\Omega \cL_0^\ve(\bS_{r\bH})\cdot \nabla\bv^0\dv+\ve\int_\Omega \cL_0^\ve(\bS_{r\bH})\cdot \nabla\big(\cL_\rho^\ve(\psi\bv^1_\bH)\big)\dv\\
+\int_\Omega \cL^\ve_0(\bbC_\bH)\nabla\bu^\ve\cdot \nabla\bv^0\dv+\ve\int_\Omega \cL^\ve_0(\bbC_\bH)\nabla\bu^\ve\cdot \nabla\big(\cL_\rho^\ve(\psi\bv^1_\bH)\big)\dv.
%\Big[\cL^\ve_0(\bS_{r\bH})+\cL^\ve_0(\bbC_\bH)\nabla\bu^\ve\Big]\cdot\Big[\nabla\bv^0+\ve\nabla\Big(\cL_\rho^\ve(\psi\bv^1_\bH)\Big)\Big]\dv
\end{multline}

%From the regularity of $\bS_r$, \eqref{cLve0}, \eqref{phi2}, and \eqref{cLverho}, there is a constant $C$ independent of $\ve$ such that 
%\beqn\label{boundFT}
%\|\cL_0^\ve(\bS_{r\bH})\cdot\nabla\big(\cL_\rho^\ve(\psi\bv^1_\bH)\big)\|_{L^\infty}\leq C\ve^{-\rho}.
%\eeqn
Looking at the first term in \eqref{balveexp} and using Proposition~\ref{obtsc} we have
\begin{align}
\int_\Omega \cL_0^\ve(\bS_{r\bH})\cdot \nabla\bv^0\dv&\rightarrow \int_\Omega\dashint_{Y_x} \bS_{r\bH}(x,y)\cdot \nabla\bv^0(x)\, dydx
%\ve\int_\Omega \cL_0^\ve(\bS_{r\bH})\cdot \nabla\big(\cL_\rho^\ve(\psi\bv^1_\bH)\big)\dv&\rightarrow 0.
\end{align}
as $\ve$ goes to zero.

Using Assumption 5(c), Corollary \ref{corconv}, and the fact that $\bv^0$ is independent of $\ve$, one finds that
\begin{multline}
\int_\Omega\cL_0^\ve(\bbC_\bH)\nabla\bu^\ve\cdot\nabla\bv^0\dv=\int_\Omega\nabla\bu^\ve\cdot\cL_0^\ve(\bbC_\bH)\nabla \bv^0\dv\\
%&=\int_\Omega\bu^\ve\cdot\cL^\ve(\bbC_\bH)\nabla \bv^0\dv+\int_\Omega\bu^\ve\cdot\Big[\cL_0^\ve(\bbC_\bH)-\cL^\ve(\bbC_\bH)\Big]\nabla \bv^0\dv.\\
\rightarrow \int_\Omega\dashint_{Y_x}\big(\nabla\bu(x)+\nabla_y\hat\bu(x,y)\big)\cdot\bbC_\bH(x,y)\nabla \bv^0(x)\, dydx\\
=\int_\Omega\dashint_{Y_x}\bbC_\bH(x,y)\big(\nabla\bu(x)+\nabla_y\hat\bu(x,y)\big)\cdot\nabla\bv^0(x)\, dydx
\end{multline}
as $\ve$ goes to zero.

Before the third term in \eqref{balveexp} is considered, notice that by the chain and product rules
\begin{multline}
\ve\nabla(\cL^\ve_\rho(\psi\bv^1_\bH))(x)=\sum_{k\in I^\ve}\Big[ \ve\bv^1(\bH^{-1}_{x^\ve_k}(x-\tilde x_k^\ve)/\ve)\otimes \nabla \psi(x) \phi_{\Omega_k^\ve}(x)\\
+\psi(x)\nabla \bv^1(\bH^{-1}_{x^\ve_k}(x-\tilde x_k^\ve)/\ve)\bH^{-1}_{x_k^\ve}\phi_{\Omega_k^\ve}(x)+\ve \psi(x)\bv^1(\bH^{-1}_{x_k^\ve}(x-\tilde x_k^\ve)/\ve)\otimes \nabla \phi_{\Omega_k^\ve}(x)\Big].
\end{multline}
Defining
\begin{align}
\bA_1^\ve(x)&=\sum_{k\in I^\ve} \bv^1(\bH^{-1}_{x^\ve_k}(x-\tilde x_k^\ve)/\ve)\otimes \nabla \psi(x) \phi_{\Omega_k^\ve}(x),\\
\bA_2^\ve(x)&=\sum_{k\in I^\ve} \psi(x)\nabla \bv^1(\bH^{-1}_{x^\ve_k}(x-\tilde x_k^\ve)/\ve)\bH^{-1}_{x_k^\ve}\phi_{\Omega_k^\ve}(x),\\
\bA_3^\ve(x)&=\sum_{k\in I^\ve} \psi(x)\bv^1(\bH^{-1}_{x_k^\ve}(x-\tilde x_k^\ve)/\ve)\otimes \nabla \phi_{\Omega_k^\ve}(x),
\end{align}
for $x\in \Omega$, we can write
\begin{multline}\label{badterm}
\ve\int_\Omega\cL^\ve_0(\bS_{r\bH})\cdot\nabla(\cL_\rho^\ve(\psi_\bH\bv^1))\dv= \ve\int_\Omega\cL^\ve_0(\bS_{r\bH})\cdot \bA_1^\ve \dv\\ + \int_\Omega\cL^\ve_0(\bS_{r\bH})\cdot \bA_2^\ve \dv+\ve\int_\Omega\cL^\ve_0(\bS_{r\bH})\cdot \bA_3^\ve \dv.
\end{multline}
Since $\bv^1$ and $\psi$ are bounded in $L^\infty$ and $\phi_{\Omega_k^\ve}$ satisfies \eqref{phi1} and \eqref{phi2}, $\bA_1^\ve$ is bounded in $L^2$ independent of $\ve$ and $\bA_3^\ve$ is bounded in $L^2$ by a constant times $\ve^{-\rho}$.  Thus, since $\cL_0^\ve(\bS_{r\bH})$ is bounded in $L^\infty$ independent of $\ve$ and $\rho<1$, the first and third terms on the right-hand side of \eqref{badterm} go to zero as $\ve$ goes to zero.  For the middle term on the right-hand side of \eqref{badterm}, first notice that by the chain rule $\nabla\bv^1(\bH^{-1}_{x^\ve_k}(x-\tilde x_k^\ve)/\ve)\bH^{-1}_{x^\ve_k}=\nabla_y \bv^1_\bH(x^\ve_k,(x-\tilde x_k^\ve)/\ve)$.  Using this fact and the definition of $\cL_0^\ve$, see \eqref{cLve0}, $\bA_2^\ve$ can be written as
\begin{align}
\bA_2^\ve(x) &= \sum_{k\in I^\ve}\psi(x)\nabla_y\bv^1_\bH(x_k^\ve,(x-\tilde x_k^\ve)/\ve)\psi_{\Omega^\ve_k}(x)\\
\label{badterm2}&=\cL^\ve_0(\nabla_y\bv^1_\bH\psi)(x)+\sum_{k\in I^\ve}\nabla_y\bv_\bH^1(x_k^\ve,(x-\tilde x_k^\ve)/\ve)\Big[\psi(x)\phi_{\Omega_k^\ve}(x)-\psi(x^\ve_k)\chi_{\Omega_k^\ve}(x)\Big].
\end{align}
The second term in \eqref{badterm2} goes to zero in $L^2$ as $\ve$ goes to zero by \eqref{phi1} and since $\psi$ is smooth.  Thus, by Proposition \ref{clconv} we obtain
\begin{align}
\lim_{\ve\rightarrow 0}\int_\Omega\cL^\ve_0(\bS_{r\bH})\cdot \bA_2^\ve \dv&=\lim_{\ve\rightarrow 0}\int_\Omega\cL^\ve_0(\bS_{r\bH})\cdot\cL^\ve_0(\nabla_y\bv^1_\bH\psi)\dv\\
%&=\lim_{\ve\rightarrow 0} \int_\Omega\nabla\bu^\ve\cdot\cL^\ve_0(\bbC_\bH\nabla_y\bv^1_\bH\psi)\dv\\
&= \int_\Omega\dashint_{Y_x}\bS_{r\bH}(x,y)\cdot\nabla_y\bv^1_\bH(x,y)\psi(x)\,dydx.
\end{align}

Computing the limit of the fourth term in \eqref{balveexp} is similar to computing the limit of the third term.  Thus, from the arguments in the previous paragraph and Corollary \ref{corconv} we find that
\begin{align}
\lim_{\ve\rightarrow 0}\ve \int_\Omega\cL^\ve_0(\bbC_\bH)\nabla\bu^\ve\cdot\nabla(\cL^\ve_\rho(\psi\bv^1_\bH))\dv&=\lim_{\ve\rightarrow 0}\int_\Omega\cL^\ve_0(\bbC_\bH)\nabla\bu^\ve\cdot \bA_2^\ve \dv\\
&\hspace{-1.2in}=\lim_{\ve\rightarrow 0}\int_\Omega\cL^\ve_0(\bbC_\bH)\nabla\bu^\ve\cdot\cL^\ve_0(\nabla_y\bv^1_\bH\psi)\dv\\
&\hspace{-1.2in}=\lim_{\ve\rightarrow 0} \int_\Omega\nabla\bu^\ve\cdot\cL^\ve_0(\bbC_\bH\nabla_y\bv^1_\bH\psi)\dv\\
&\hspace{-1.2in}= \int_\Omega\dashint_{Y_x}\bbC_\bH(x,y)\big[\nabla\bu(x)+\nabla_y\hat\bu(x,y)\big]\cdot\nabla_y\bv^1_\bH(x,y)\psi(x)\,dydx.
\end{align}

Finally, to deal with the last term on the left-hand side of \eqref{balveor} we use the fact that $\cL^\ve_\rho(\psi_\bH\bv^1)$ is bounded in $L^2$ independent of $\ve$ to find that
\beqn
\ve\int_\Omega\bb\cdot \cL^\ve_\rho(\psi\bv^1_\bH)\dv \rightarrow 0\quad \text{as}\quad \ve\rightarrow 0.
\eeqn	

Putting the results of the last five paragraphs together we find that the limit of \eqref{balveor} as $\ve$ goes to zero is
\begin{multline}\label{ballimit0}
\int_\Omega\dashint_{Y_x}\Big(\bS_{r\bH}(x,y)+\bbC_\bH(x,y)[\nabla\bu(x)+\nabla_y\hat\bu(x,y)]\Big)\cdot\big(\nabla\bv^0(x)+\psi(x)\nabla_y\bv^1_\bH(x,y)\big)\, dydx\\
+\int_\Omega\bb(x)\cdot\bv^0(x)\, dx=0.
\end{multline}
Using the change of variables $y=\bH_x\bar y$ results in
\begin{multline}\label{ballimit1}
\int_\Omega\dashint_{Y}\Big(\bS_{r\bH}(x,\bar y)+\bbC(x,\bar y)\big[\nabla\bu(x)+\nabla_{\bar y}\hat\bu(x,\bH_x\bar y)\bH^{-1}_x\big]\Big)\cdot[\nabla\bv^0(x)+\psi(x)\nabla\bv^1(\bar y)\bH^{-1}_x]\, d\bar ydx\\
+\int_\Omega\bb(x)\cdot\bv^0(x)\, dx=0.
\end{multline}

Consider the case where $\bv^0=\textbf{0}$ and use the fact that $\psi\in C^\infty_0(\Omega,\Real)$ was arbitrary to conclude from \eqref{ballimit1} that
\beqn\label{ballimit2}
\dashint_{Y}\Big(\bS_{r\bH}(x,\bar y)+\bbC(x,\bar y)\big[\nabla\bu(x)+\nabla_{\bar y}\hat\bu(x,\bH_x\bar y)\bH^{-1}_x\big]\Big)\cdot\nabla\bv^1(\bar y)\bH^{-1}_x\, d\bar y=0\quad \text{for a.e.}\ x\in\Omega.
\eeqn
Fix $x\in\Omega$.  For each $\bv^1\in C^\infty_\text{per}(Y,\Real^n)$ we can define $\overline\bv\in C^\infty_\text{per}(Y,\Real^n)$ by $\overline\bv=\bH^{-\trans}_x\bv^1$.  Thus, \eqref{ballimit2} can be written as 
\begin{equation}\label{ballimit3}
\dashint_{Y}\bH^{-1}_x\Big(\bS_r(x,\bar y)+\bbC(x,\bar y) \big[\nabla\bu(x)+\nabla_{\bar y}\hat\bu(x,\bH_x\bar y)\bH^{-1}_x\big]\Big)\bH^{-\trans}_x\cdot\nabla\overline\bv(\bar y)\, d\bar y=0.
%\rfa \overline\bv\in C^\infty_\text{per}(Y,\Real^n).
\end{equation}
Given $\bE\in\Lin$, let $\hat\bw^\bE(x,\cdot)\in \cW_\text{per}(Y,\Real^n)$ be the unique solution of
\beqn\label{ucp}
%\left \{
%\begin{array}{ll}
%\displaystyle\dashint_Y\bH^{-1}_x\bbC(x,y)\Big(\bE+\bH_x^{-\trans}\nabla_{\bar y}\bw^\bE(x,\bar y)\bH^{-1}_x\Big)\bH^{-\trans}_x\cdot\nabla\overline\bv(\bar y)\, d\bar y=0\rfa \overline\bv\in C^\infty_\text{per}(Y,\Real^n) \\[10pt]
\displaystyle\text{div}_{\bar y}\Big\{\bH^{-1}_x\Big(\bS_r(x,\cdot)+\bbC(x,\cdot)\big[\bE+\bH_x^{-\trans}\nabla_{\bar y}\hat\bw^\bE(x,\cdot)\bH^{-1}_x\big]\Big)\bH^{-\trans}_x\Big\}=\bzero\quad \text{in}\ Y. %\\[10pt]
%\displaystyle\bw^\bE(x,\cdot)\ \ \ Y\text{-periodic},\quad \dashint_Y\bw^\bE(x,\bar y)\, d\bar y=\textbf{0}.
%\end{array}
%\right.
\eeqn
The existance of a unique solution to this unit cell problem, as well as all of the other unit cell problems such as \eqref{ucp0} and \eqref{ucpE}, follow from the Assumptions and the Lax--Milgram theorem.  Notice that $x$ acts as a parameter in these equations.  It follows that $\hat \bu$ in \eqref{ballimit3} is of the form
\beqn\label{hatu}
\hat\bu(x,\bH_x\bar y)=\bH^{-\trans}_x\hat\bw^{\nabla\bu(x)}(x,\bar y)\qquad \text{for a.e.}\ x\in\Omega,\ \bar y\in Y.
\eeqn
Moreover, because of the structure of \eqref{ucp}, $\hat\bw^\bE$ is an affine function of $\bE$.  In particular, $\hat\bw^\bE$ is of the form 
\beqn
\hat\bw^\bE=\hat\bw^\bzero+\bw^\bE
\eeqn
where $\hat\bw^\bzero(x,\cdot)\in \cW_\text{per}(Y,\Real^n)$ is the unique solution of \eqref{ucp0}, which is \eqref{ucp} with $\bE=\bzero$, and $\bw^\bE(x,\cdot)\in \cW_\text{per}(Y,\Real^n)$ is the unique solution of \eqref{ucpE}.  The function $\bw^\bE(x,\cdot)$ depends linearly on the symmetric part of $\bE$.

Going back to \eqref{ballimit1}, now consider the case where $\psi=0$ and use \eqref{hatu} to obtain
\begin{multline}\label{ballimit4}
\int_\Omega\dashint_{Y}\Big(\bS_r(x,\bar y)+\bbC(x,\bar y)\big[\nabla\bu(x)+\bH^{-\trans}_x\nabla_{\bar y}\hat\bw^\bE(x,\bar y)\bH^{-1}_x\big]\Big)\cdot\nabla\bv^0(x)\, d\bar ydx\\
+\int_\Omega\bb(x)\cdot\bv^0(x)\, dx=0.
\end{multline}
Motivated by this equation, define $\bS_{r\text{hom}}$ by \eqref{Srhom} and $\bbC_\text{hom}$ by \eqref{Chom} so that \eqref{ballimit4} can be written as
\beqn
\int_\Omega(\bS_{r\text{hom}}+\bbC_\text{hom}\nabla\bu)\cdot\nabla\bv^0\dv+\int_\Omega\bb\cdot\bv^0\dv=\bzero.
\eeqn
Since this holds for all $\bv^0\in C^\infty_0(\Omega,\Real^n)$, $\bu$ is the weak solution of \eqref{dispbvphom}.

Using the same arguments as in the classical periodic microstructure case, see for example Cioranescu and Donato \cite{CD} or Oleinik, Shomeav and Yosifian \cite{OSY}, it can be shown that the effective elasticity tensor \eqref{Chom} satisfies the properties mentioned in Assumption 5(a)--(c).  It then follows from the Lax--Milgram theorem that  \eqref{dispbvphom} has a unique solution and, hence, the entire sequence $\bu^\ve$ converges to $\bu$, not just a subsequence.
\end{proof}

Locally periodic microstructure can be used to approximate nonperiodic microstructures.  To see how this cane be done, consider $\bL,\bM:\overline\Omega\rightarrow\Lin$ with $\bL$ of class $C^2$ and $\bM$ of class $C^1$ such that $\bL_x$ is invertible at each $x\in\Omega$.  Moreover, assume that the linear mapping
\beqn\label{Hlinear}
\bu\mapsto \bL^{-1}_{x}\bu+\nabla(\bL^{-1})_{x}[\bu]x
\eeqn
is invertible.  The functions $\bL$ and $\bM$ can be used to defined an elastic material with nonperiodic microstructure.  Consider the elastic material with residual stress $\bS_{r\text{np}}^\ve$ and elasticity tensor $\bbC^\ve_\text{np}$ defined at $x\in\Omega$ by
\begin{align}
\bS_{r\text{np}}^\ve(x)&=\hat\bS_r(\bM_x,\bL^{-1}_xx/\ve)\\
\bbC^\ve_\text{np}(x)&=\hat\bbS(\bbC(\bL^{-1}_xx/\ve),\bM_x).
\end{align}
To construct the locally periodic approximation, begin by fixing $r\in(1/2,1)$ and for each $k\in I^\ve$ choose $x_k\in\Omega_k^\ve$ arbitrarily.  It follows from a Taylor series expansion about $x_k^\ve$ that for all $x\in\Omega_k^\ve$
\begin{align}
\bL^{-1}_x x/\ve&=\bL^{-1}_{x_k^\ve}x_k^\ve/\ve+\bL^{-1}_{x_k^\ve}(x-x_k^\ve)/\ve+\nabla(\bL^{-1})_{x_k^\ve}[x-x_k^\ve]x_k^\ve/\ve+O(\ve^{2r-1}).
%&=\bL^{-1}_{x_k^\ve}x_k^\ve/\ve+\bL^{-1}_{x_k^\ve}(x-x_k^\ve)/\ve-\bL^{-1}_{x_k^\ve}\nabla\bL_{x_k^\ve}[x-x_k^\ve]\bL^{-1}_{x_k^\ve}x_k^\ve/\ve+O(\ve^{2r-1})
%\bM_x&=\bM_{x_k^\ve}+\nabla\bM_{x_k^\ve}[x-x_k^\ve]+o(x-x_k^\ve)
\end{align}
Define $\bH_x$ to be the inverse of the linear mapping in \eqref{Hlinear} so that
\beqn\label{keyapprox}
\bL^{-1}_x x/\ve=\bL^{-1}_{x_k^\ve}x_k^\ve/\ve+\bH^{-1}_{x_k^\ve}(x-x_k^\ve)/\ve+O(\ve^{2r-1}).
\eeqn
Next, by the periodicity of $\hat\bS_r$ and $\bbC$, we can find $\tilde x_k^\ve\in\Omega_k^\ve$ such that all $x\in\Omega_k^\ve$
\begin{align}
\hat\bS_r(\bM_x,\bL^{-1}_{x_k^\ve}x_k^\ve/\ve+\bH^{-1}_{x_k^\ve}(x-x_k^\ve)/\ve)&=\hat\bS_r(\bM_x,\bH^{-1}_{x_k^\ve}(x-\tilde x_k^\ve)/\ve)\\
\bbC(\bL^{-1}_{x_k^\ve}x_k^\ve/\ve+\bH^{-1}_{x_k^\ve}(x-x_k^\ve)/\ve)&=\bbC(\bH^{-1}_{x_k^\ve}(x-\tilde x_k^\ve)/\ve).
\end{align}

Motivated by the above discussion, consider the locally periodic microstructure defined by
\begin{align}
\label{YStrans1}\bS_r^\ve(x)&=\sum_{k\in I^\ve} \hat\bS_r(\bK_{x^\ve_k},\bH_{x^\ve_k}^{-1}(x-\tilde x_k^\ve)/\ve)\chi_{\Omega_k^\ve}(x),\\
\label{YCtrans1}\bbC^\ve(x)&=\sum_{k\in I^\ve} \hat\bbS(\bbC(\bH_{x^\ve_k}^{-1}(x-\tilde x_k^\ve)/\ve),\bK_{x^\ve_k})\chi_{\Omega_k^\ve}(x),
\end{align}
where $\bK=\bM$ and $\bH$ is the inverse of the linear mapping defined in \eqref{Hlinear}.  Assuming $1/2<r<1$ and using \eqref{keyapprox}, \eqref{YStrans1}, and \eqref{YCtrans1} it can be shown that
\begin{align}
\|\bS^\ve_r-\bS^\ve_{r\text{np}}\|_{L^2(\Omega)} + \|\bbC^\ve-\bbC^\ve_{\text{np}}\|_{L^2(\Omega)}&\rightarrow 0\quad \text{as}\ \ve\rightarrow 0.
\end{align}
This fact allows for the arguments in Theorem~\ref{thmmain} to be used to show that the solutions $\bu^\ve$ of the system
\beqn\label{dispbvp1}
\left \{
\begin{array}{ll}
\text{div}(\bS_{r\text{np}}^\ve+\bbC_\text{np}^\ve\nabla\bu^\ve)+\bb=\textbf{0}\qquad &\text{in}\ \Omega, \\[10pt]
\bu^\ve = \bu_\circ\qquad &\text{on}\ \partial\Omega,
%(\bbC^\ve\be(\bu^\ve))\bn = \bt\qquad &\text{on}\ \Gamma_2,
\end{array}
\right.
\eeqn
converge to the function $\bu$ that satisfies \eqref{dispbvphom} with $\bS_{r\text{hom}}$ and $\bbC_{\text{home}}$ given by \eqref{Srhom} and \eqref{Chom}, respectively.

Notice that when $\bM=\bL$ in the nonperiodic structure, the locally periodic approximation does \emph{not} satisfy $\bK=\bH$.

\section{Interpretation of results and special cases}\label{sectInterp}

%Several comments about the homogenized system \eqref{dispbvphom} are warranted.  First of all, notice that this system has the same structure as the original microscopic system \eqref{dispbvp}, just as in the classical periodic microstructure case.  This was also true in the work of Ptashnyk \cite{Plpts} dealing with locally periodic microstructure.  It can be shown that the effective elasticity tensor \eqref{Chom} satisfies the conditions listed in Assumptions 4(a)--(c).  The proof of this is similar to the proof in the classical periodic microstructure case\footnote{See, for example, Oleinik, Shomaev, and Yosifian \cite{OSY} or Cioranescu and Donato \cite{CD}.} and, hence, is omitted.  {\color{blue} check}

Several comments about Theorem~\ref{thmmain} are warranted.  We see from \eqref{Chom} that the effective elasticity tensor $\bbC_\text{hom}$ it is not constant.  This is contrary to the classical case when the microstructure is periodic.  Since the effective elasticity tensor $\bbC_\text{hom}$ depend on $x\in\Omega$, it may initially appear that the unit cell problem \eqref{ucpE} must be solved at each $x$.  From a theoretical standpoint, this is not a problem.  However, to calculate the effective elasticity tensor numerically, solving \eqref{ucpE} at each $x$ is tantamount to solving a system of linear equations at each point in a mesh.  Clearly this is infeasible.  A more sensible strategy is to compute the effective elasticity tensor at a few points in the domain and use these values to define a piecewise constant elasticity tensor that approximates \eqref{Chom}.   This kind of approach was studied by Shkoller \cite{Shk}.  However, this method is problematic if $\bH$ and $\bK$ have large gradients.  Since the solution of \eqref{ucpE}, and hence $\bbC_\text{hom}$, only depend on $x$ through $\bK_x$ and $\bH_x$, another strategy is available.  Rather than solving \eqref{ucpE} for different values of $x$, it can be solved for different values of $\bK$ and $\bH$ and the resulting effective elasticity tensor can be calculated using \eqref{Chom}.  To obtain the effective elasticity tensor for values of $\bK$ and $\bH$ not explicitly considered, an interpolation can be used.  This same method can be applied to the effective residual stress since the solution of \eqref{ucp0} only depends on $x$ through $\bK_x$ and $\bH_x$ as well.  

Interestingly, the effective residual stress $\bS_{r\text{hom}}(x)$ given in \eqref{Srhom} is not solely given by the average of $\hat\bS_r(\bK_x,\cdot)$ over the unit cell $Y$, as one might conjecture.  Rather, it is given by this average plus an additional term involving the elasticity tensor $\bbC$ and a supplemental function $\hat\bw^\bzero$ that is found by solving the unit cell problem \eqref{ucp0}.   Motivated by the form of $\hat\bS_r$ given in \eqref{Ssp} and the fact that $Y$ was a stress-free configuration, let us suppose $\hat\bS_r(\bQ,\cdot)=\bzero$ when $\bQ$ is orthogonal.  In this case, if $\bK_x$ is orthogonal it follows from the structure of \eqref{ucp0} that $\hat\bw^\bzero(x,\cdot)=\bzero$ and, hence, $\bS_{r\text{hom}}(x)=\bzero$.  Put more succinctly, when $\bK_x$ is orthogonal, the effective residual stress vanishes at $x$.  There is another case in which the effective residual stress is irrelevant.  When $\bH$ and $\bK$ are constant the microstructure is periodic.  In this case, the effective residual stress is independent of $x$ and, hence, constant.  Thus, the divergence of $\bS_{r\text{hom}}$ is zero so it plays no role in the macroscopic equation \eqref{dispbvphom}.  This means that the consideration of residual stresses is only interesting when nonperiodic microstructures are considered, not in the classical periodic case.

%Notice that $\hat\bw^0$ only depends on $x\in\Omega$ through $\bH(x)$ and $\bK(x)$.  Thus, to compute the effective residual stress it is not necessary to solve \eqref{ucp0} for each $x\in\Omega$, but rather only for different possible values of $\bH_x$ and $\bK_x$. {\color{blue} comment about numerics?}

Next consider when $\bS^\ve_r$ and $\bbC^\ve$ are specified by \eqref{Ssp} and \eqref{bbSsp}, so that the formula for the macroscopic elasticity tensor $\bbC_\text{hom}$ \eqref{Chom} and the unit cell problem \eqref{ucpE} are given by
\beqn\label{homCsp}
\bbC_\text{hom}(x)\bE=\dashint_{Y}\bK_x\bbC(\bar y)\big[\bK_x^\trans\bE\bK_x+\bK_x^\trans\bH^{-\trans}_x\nabla_{\bar y}\bw^\bE(x,\bar y)\bH^{-1}_x\bK_x\big]\bK_x^\trans\, d\bar y
\eeqn
and
\beqn\label{ucp0sp}
%\left \{
%\begin{array}{ll}
\displaystyle\text{div}_{\bar y}\Big(\bH^{-1}_x\bK_x\bbC\big[\bK_x^\trans\bE\bK_x+\bH_x^{-\trans}\bK_x^\trans\nabla_{\bar y}\bw^\bE(x,\cdot)\bK_x\bH^{-1}_x\big]\bK_x^\trans\bH^{-\trans}_x\Big)=\bzero\quad \text{in}\ Y. %\\[10pt]
%\displaystyle\bw^\bE(x,\cdot)\ \ \ Y\text{-periodic},\quad \dashint_Y\bw^\bE(x,\bar y)\, d\bar y=\textbf{0}.
%\end{array}
%\right.
\eeqn
The formula for the effective residual stress and the unit cell problem \eqref{ucp0} for these choices of $\bS^\ve_r$ and $\bbC^\ve$ are similar and, hence, are not explicitly written.  See Figure~\ref{figcases}(a) for an example of a nonperiodic microstructure that can be approximated by a locally periodic microstructure in which $\bH\not=\bK$.  Several special cases are of particular interest.  

\begin{figure}
\centering
\includegraphics[height=2in]{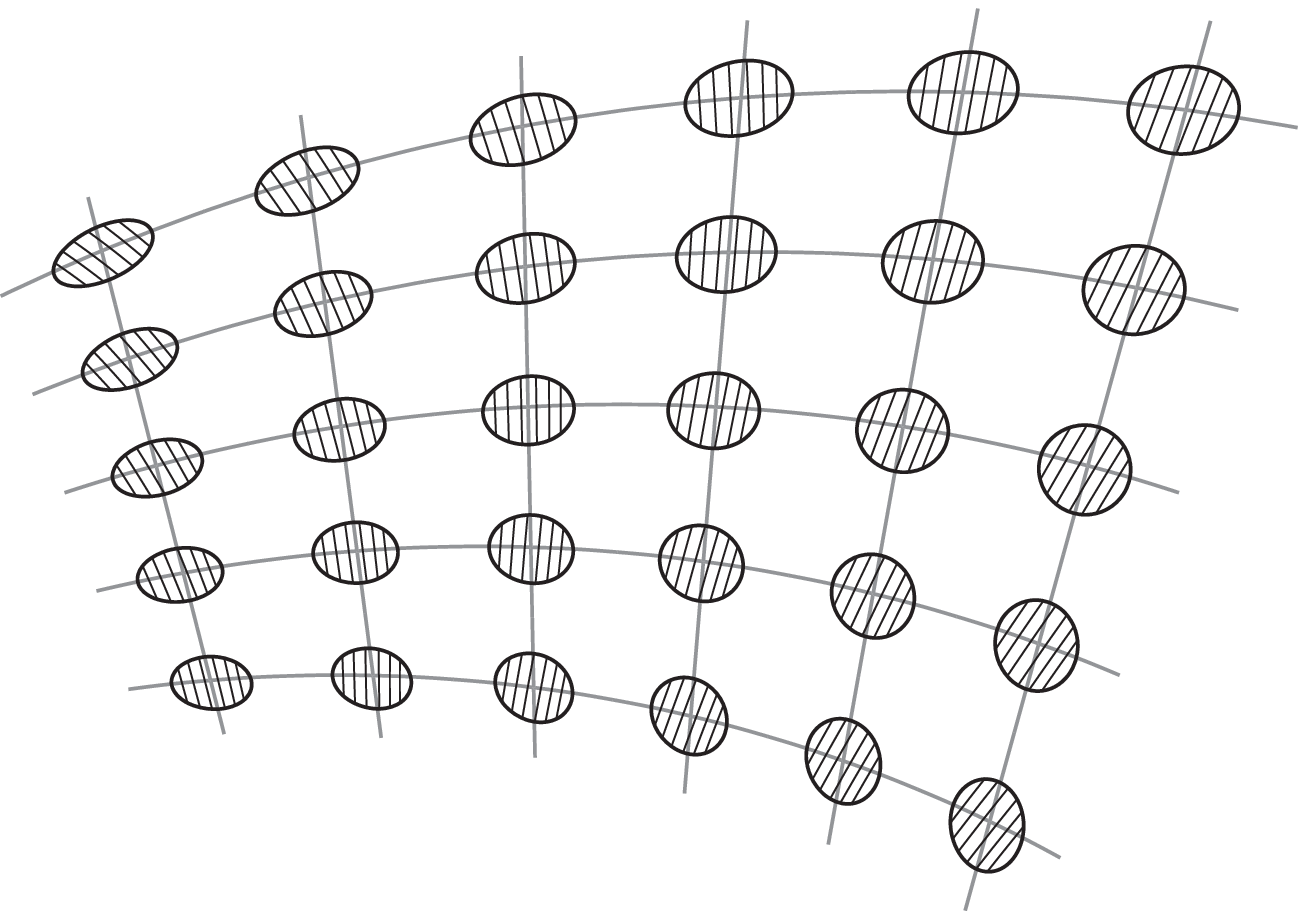}
\vspace{.1in}
\includegraphics[height=2in]{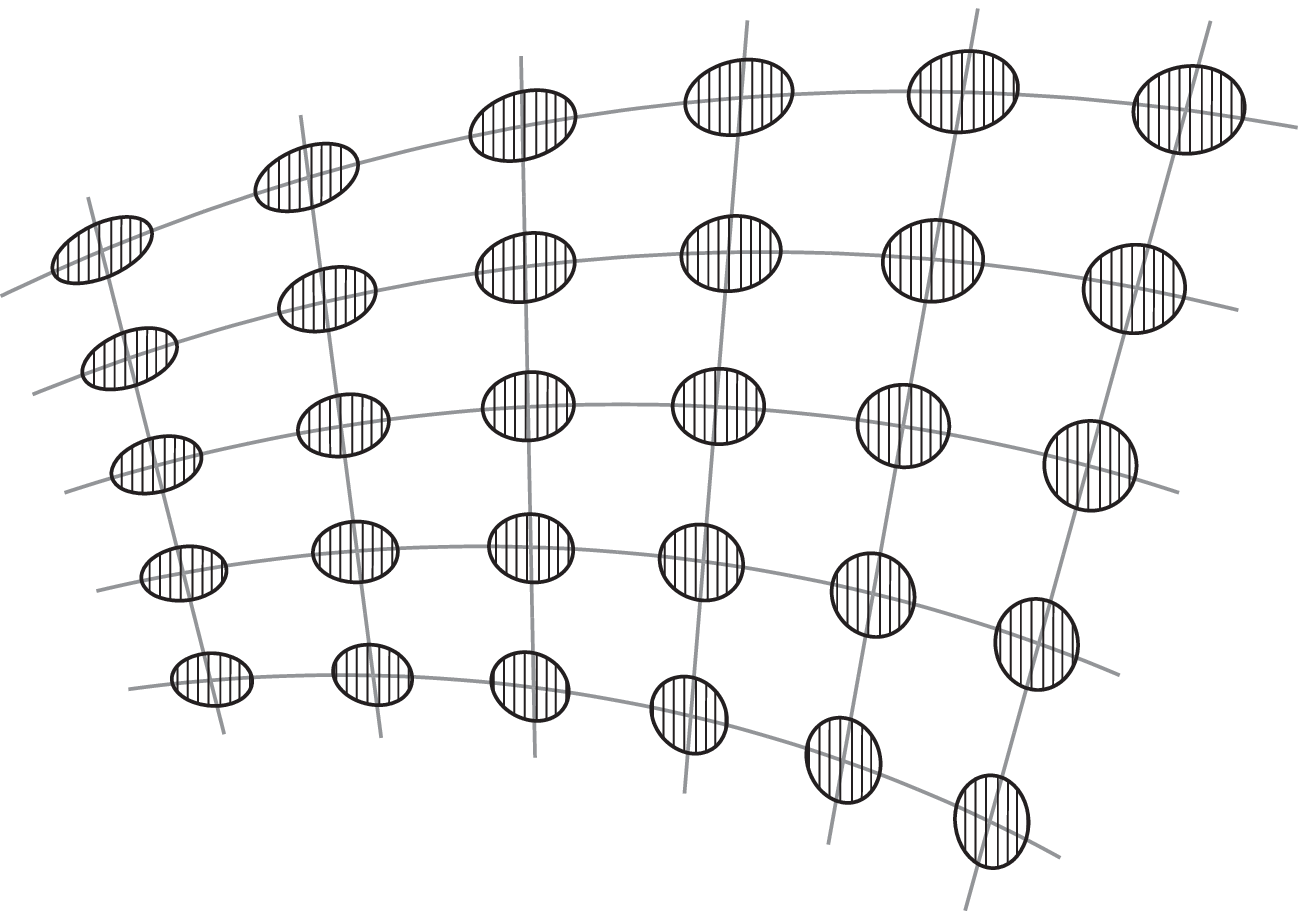}
%\par\vspace{.1in}
\includegraphics[height=2in]{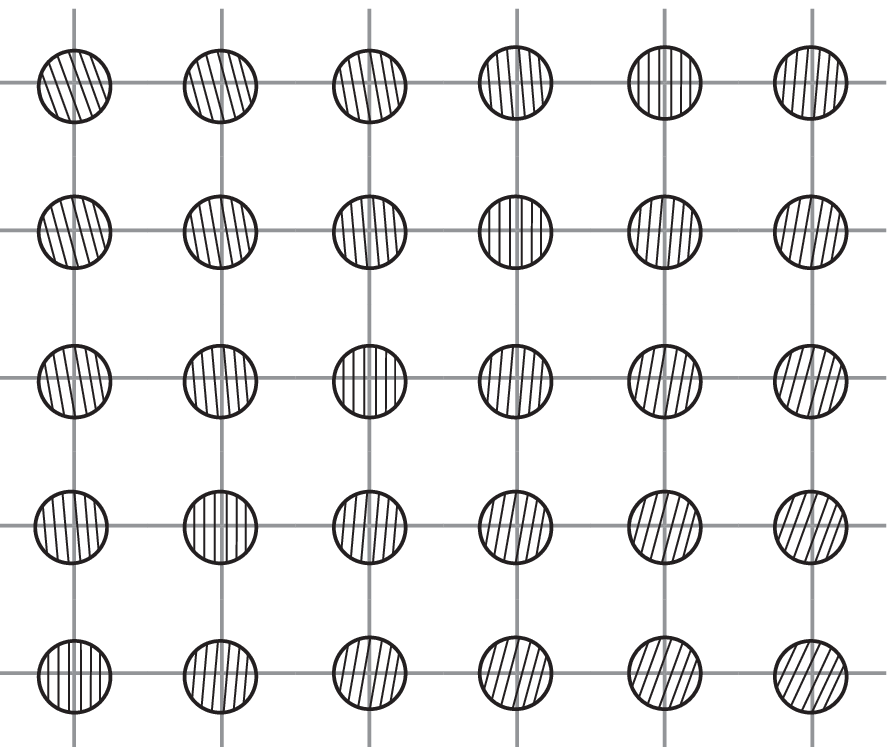}
\hspace{.3in}
\includegraphics[height=2in]{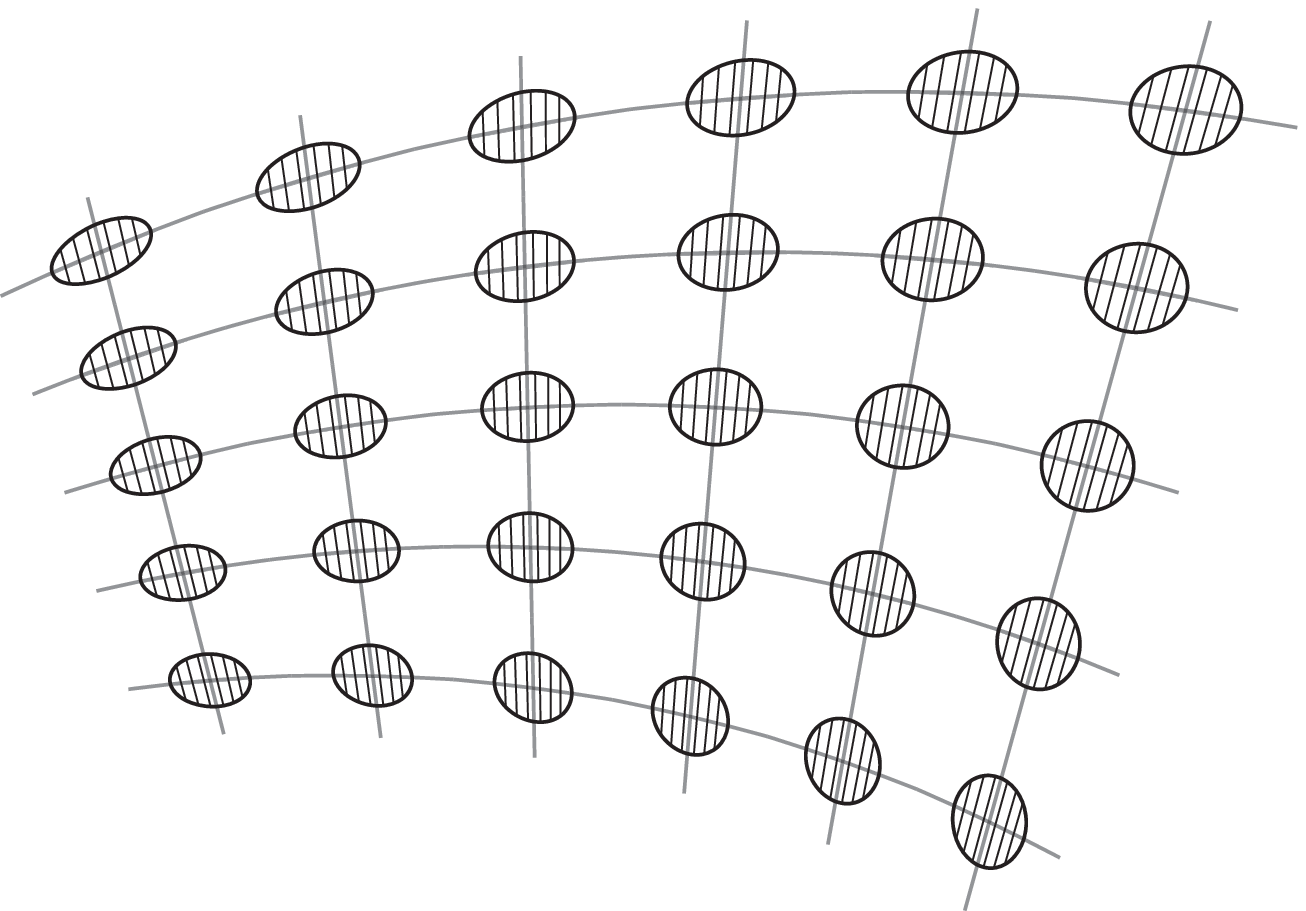}
\thicklines
\put(-330,170){(a)}
\put(-115,170){(b)}
\put(-115,-15){(d)}
\put(-330,-15){(c)}
\caption{A depiction of several microstructures that can be approximated by locally periodic microstructures for different choices of $\bH$ and $\bK$.  These microstructures consist of two materials: an isotropic matrix and anisotropic spheres.  The parallel lines in the spheres represent the orientation of the anisotropy within the spheres.  The grid lines are present to help visualize how far the microstructure deviates from being periodic.  (a) This microstructure can be approximated by a locally periodic microstructure in which $\bH$ and $\bK$ are not equal.  Notice that the alignment between the parallel lines within each sphere and the reference lines differs from sphere to sphere. (b) This microstructure depicts the $\bK=\bone$ case and is similar to the kinds of microstructure considered by Ptashnyk \cite{Plpts} and Briane \cite{Briane94}.  Notice that the alignment of the lines within the spheres is always vertical, indicating that the anisotropy of the spheres is not changing.  (c) This microstructure is approximated by a locally periodic microstructure with $\bH=\bone$.  For this kind of microstructure, the alignment of the anisotropy changes but the arrangement of the spheres is periodic. (d)  Here the case $\bH=\bK$ is depicted.  The the parallel lines within each sphere are aligned with the reference lines in every sphere.}
\label{figcases}
\end{figure}

\subsection{$\bK=\textbf{1}$ }

In this case \eqref{homCsp} and \eqref{ucp0sp} become
\beqn\label{homC1}
\bbC_\text{hom}(x)\bE=\dashint_{Y}\bbC(\bar y)\big[\bE+\bH^{-\trans}_x\nabla_{\bar y}\bw^\bE(x,\bar y)\bH^{-1}_x\big]\, d\bar y
\eeqn
and
\beqn\label{ucp1}
%\left \{
%\begin{array}{ll}
\displaystyle\text{div}_{\bar y}\Big(\bH^{-1}_x\bbC\big[\bE+\bH_x^{-\trans}\nabla_{\bar y}\bw^\bE(x,\cdot)\bH^{-1}_x\big]\bH^{-\trans}_x\Big) =\bone\quad \text{in}\ Y.%\\[10pt]
%\displaystyle\bw^\bE(x,\cdot)\ \ \ Y\text{-periodic},\quad \dashint_Y\bw(x,\bar y)\, d\bar y=\textbf{0}
%\end{array}
%\right.
\eeqn
Since $\bone$ is orthogonal, the effective residual stress $\bS_r$ vanishes.  Since $\bK$ describes how the alignment of the anisotropy of $\bbC$ change from patch to patch, in the case $\bK=\bone$, the alignment of the anisotropy does not change.  See Figure~\ref{figcases}(b).

For a particular choice of $\bH$ this case was considered by Ptashnyk \cite{Plpts}.  The above analysis could also of been carried out for the Poisson equation rather than the equation of elasticity.  An analysis of a microstructure similar to this was carried out by Briane \cite{Briane94} when $\bH$ is given by the gradient of a function.  %These authors have pointed out that materials consisting of a homogeneous medium whose nonperiodic microstructure is determined by the inclusion of small regions that have different properties can be approximated by a material with locally periodic microstructure when the positions of the inclusions differ from being periodic by a s

\subsection{$\bH=\textbf{1}$}

Under this assumption \eqref{homCsp} and \eqref{ucp0sp} become
\beqn\label{homC2}
\bbC_\text{hom}(x)\bE=\dashint_{Y}\bK_x\bbC(\bar y)\big[\bK_x^\trans\bE\bK_x+\bK_x^\trans\nabla_{\bar y}\bw^\bE(x,\bar y)\bK_x\big]\bK_x^\trans\, d\bar y
\eeqn
and
\beqn\label{ucp2}
%\left \{
%\begin{array}{ll}
\displaystyle\text{div}_{\bar y}\Big(\bK_x\bbC\big[\bK_x^\trans\bE\bK_x+\bK_x^\trans\nabla_{\bar y}\bw^\bE(x,\cdot)\bK_x\big]\bK_x^\trans\Big)=\bone\quad \text{in}\ Y. %\\[10pt]
%\displaystyle\bw^\bE(x,\cdot)\ \ \ Y\text{-periodic},\quad \dashint_Y\bw^\bE(x,\bar y)\, d\bar y=\textbf{0}
%\end{array}
%\right.
\eeqn
Here $\bbC_\text{hom}$ gives the elasticity tensor of a material whose elastic properties on the microscale are being transformed by $\bK$ but the orientation of the unit cell remains unchanged throughout the microstructure.  See Figure~\ref{figcases}(c).

%{\color{blue}For example...}

\subsection{$\bH=\bK$}

Perhaps the most interesting case is when $\bH=\bK$.  See Figure~\ref{figcases}(d) for a depiction of a nonperiodic microstructure that can be approximated by a locally periodic microstructure corresponding to $\bH=\bK$.  Under this assumption, \eqref{ucp0sp} becomes
\beqn\label{ucp3}
%\left \{
%\begin{array}{ll}
\displaystyle\text{div}_{\bar y}\Big(\bbC\big[\bK_x^\trans\bE\bK_x+\nabla_{\bar y}\bw^\bE(x,\cdot)\big]\Big)=\bzero\quad \text{in}\ Y. %\\[10pt]
%\displaystyle\bw\ \ \ Y\text{-periodic},\quad \dashint_Y\bw(x,\bar y)\, d\bar y=\textbf{0}.
%\end{array}
%\right.
\eeqn
If, for any $\bE\in\Lin$, $\tilde\bw^\bE\in \cW_\text{per}(Y,\Real^n)$ is the unique solution of 
\beqn\label{ucp3.1}
%\left \{
%\begin{array}{ll}
\displaystyle\text{div}_{\bar y}\Big(\bbC\big[\bE+\nabla_{\bar y}\tilde\bw^\bE\big]\Big)=\textbf{0}\quad \text{in}\ Y, %\\[10pt]
%\displaystyle\tilde\bw\ \ \ Y\text{-periodic},\quad \dashint_Y\tilde\bw(\bar y)\, d\bar y=\textbf{0},
%\end{array}
%\right.
\eeqn
then the solution of \eqref{ucp3} must have the form
\beqn
\bw^\bE(x,\bar y)=\tilde\bw^{\bK_x^\trans\bE\bK_x}(\bar y)\quad \text{for a.e.}\ x\in\Omega,\ \bar y\in Y.
\eeqn
Using $\tilde\bw$, the effective elasticity tensor \eqref{homCsp} can be written as
\beqn\label{homC3}
\bbC_\text{hom}(x)\bE=\dashint_{Y}\bK_x\bbC(\bar y)\big[\bK_x^\trans\bE\bK_x+\nabla_{\bar y}\tilde\bw^{\bK_x^\trans\bE\bK_x}(\bar y)\big]\bK_x^\trans\, d\bar y.
\eeqn
This macroscopic elasticity tensor is obtained by solving the unit cell problem \eqref{ucp3.1} which does not depend on $x\in\Omega$.  Moreover, notice that for all $x_1,x_2\in\Omega$, if we set $\bM(x_1,x_2)=\bK_{x_1}^{-1}\bK_{x_2}$, then
\beqn\label{Cmu}
\bbC_\text{hom}(x_2)\bE=\bM(x_1,x_2)\bbC_\text{hom}(x_1)\big[\bM(x_1,x_2)^\top\bE\bM(x_1,x_2)\big]\bM(x_1,x_2)^\trans\rfa \bE\in\Lin.
\eeqn
Thus, knowing the macroscopic elasticity tensor at any one point determines the elasticity tensor at any other point.  Once the unit cell problem \eqref{ucp3.1} is solved and the average in \eqref{homC3} is computed at one point, \eqref{Cmu} can be used to find the effective elasticity tensor at all other points.  Hence, unlike the other previously considered cases where theoretically an infinite number of unit cell problems have to be solved to determine the effective elasticity tensor, here the unit cell problem \eqref{homC3} only has to be solved six times, once for each element of a basis for $\Sym$, to completely determine $\bbC_\text{hom}$.

The situation involving the residual stress is not the same.  As has already been mentioned, the solution of \eqref{ucp0} only depends on $x$ through the values $\bH_x$ and $\bK_x$.  Motivated by this, let $\hat\bw^0(\bK,\cdot)\in\cW_\text{per}(Y,\Real^n)$ be the unique solution of 
\beqn\label{ucp03}
%\left \{
%\begin{array}{ll}
\displaystyle\text{\rm div}_{\bar y}\big(\hat\bS(\bK^\trans\bK,\cdot)+\bbC\nabla_{\bar y}\hat\bw^\bzero(\bK,\cdot)\big)=\bzero %\\[10pt]
%\displaystyle\hat\bw^\bzero(\bK,\cdot)\ \ \ Y\text{-periodic},\quad \dashint_Y\hat\bw^\bzero(\bK,\bar y)\, d\bar y=\bzero
%\end{array}
%\right.
\eeqn
so that the effective residual stress is given by
\beqn\label{Srhom3}
\bS_{r\text{\rm hom}}(x)=\dashint_{Y}\bK_x\Big(\hat\bS(\bK^\trans_x\bK_x,\bar y)+\bbC(\bar y)\nabla_{\bar y}\hat\bw^\bzero(\bK_x,\bar y)\Big)\bK_x^\trans\, d\bar y.
\eeqn
It is not possible to relate the residual stress at different points in a way that is analogous for $\bbC_\text{hom}$ in \eqref{Cmu}, and so \eqref{ucp03} must be solved for different values of $\bK$ to obtain $\bS_{r\text{\rm hom}}(x)$ for different values of $x$.  If we set
\beqn
\hat\bT_\text{hom}(\bE,x)=\hat\bS_\text{hom}(x)+\bbC_\text{hom}(x)\bE,
\eeqn
then from \eqref{Cmu} and \eqref{Srhom3} it follows that $\hat\bT_\text{hom}$ satisfies
\beqn
\hat\bT_\text{hom}(\bE,x_1)=\bM(x_1,x_2)\hat\bT_\text{hom}\big(\bM(x_1,x_2)^\top\bE\bM(x_1,x_2),x_2\big)\bM(x_1,x_2)^\top
\eeqn
for all $x_1,x_2\in\Omega$.  This says that in the homogenized limit, the material is materially uniform with $\bM$ being the material uniformity and $\bK$ being a uniform reference.  Moreover, if $\bK$ is the gradient of a function, then the material is also homogeneous.  However, in general, the material is inhomogeneous.  For a detailed discussion of material uniformity and inhomogeneity see, for example, Epstein and El$\dot{\text{z}}$anowski \cite{EE}, Noll \cite{N67}, or Wang \cite{W67}.

As mentioned in the introduction, microstructures appearing in nature often can be approximated by locally periodic microstructures in which $\bH=\bK$.  Such materials have anisotropic components that make up their microstructure.  As the orientation of the microstructure changes from point to point, the orientation of the anisotropies changes as well.  A prime example of such a microstructure occurs in plant cell walls in which the anisotropic fibrils change their orientation throughout the thickness of the cell wall.

\bibliographystyle{acm}
\bibliography{locper} 

\end{document}